\documentclass[11pt]{amsart}
\usepackage[utf8]{inputenc}
\usepackage[pdftex]{graphicx}
\usepackage{amssymb}
\usepackage{amsmath}
\usepackage{amsfonts}
\usepackage{amsthm}
\usepackage{bm}
\usepackage{mathrsfs}
\usepackage{appendix, comment}
\usepackage{enumitem}
\usepackage[margin = 1.1in]{geometry}
\usepackage{stmaryrd}
\usepackage{mathtools}
\mathtoolsset{showonlyrefs}
\usepackage[normalem]{ulem} 

\usepackage{xcolor}
\definecolor{antiquefuchsia}{rgb}{0.57, 0.36, 0.51}
\definecolor{azure}{rgb}{0.0, 0.5, 1.0}
\definecolor{indigo}{rgb}{0.0, 0.25, 0.42}

\definecolor{linkcolour}{rgb}{0,0,0.7}
\definecolor{urlcolour}{rgb}{0,0,0.4}
\definecolor{citecolour}{rgb}{0.4,0.1,0}

\usepackage[backref=page, colorlinks = true, linkcolor = linkcolour, citecolor = citecolour]{hyperref}

\renewcommand*{\backref}[1]{}
\renewcommand*{\backrefalt}[4]{%
    \ifcase #1 (Not cited.)%
    \or        (Cited on page~#2.)%
    \else      (Cited on pages~#2.)%
    \fi}

\makeatletter
\def\th@plain{%
	\thm@notefont{}
	\itshape 
}
\def\th@definition{%
	\thm@notefont{}
	\normalfont 
}
\makeatother

\numberwithin{equation}{section}

\newtheorem{theorem}{Theorem}[section]
\newtheorem{lemma}[theorem]{Lemma}
\newtheorem{proposition}[theorem]{Proposition}

\theoremstyle{definition}
\newtheorem{definition}[theorem]{Definition}

\theoremstyle{remark}
\newtheorem{remark}[theorem]{Remark}

\newcommand{\N}{\mathbb{N}}

\newcommand{\dom}{\operatorname{dom}}

\newcommand{\mres}{\mathbin{\vrule height 1.6ex depth 0pt width
0.13ex\vrule height 0.13ex depth 0pt width 1.3ex}}

\DeclareMathOperator{\spt}{spt}

\DeclareMathOperator{\dist}{dist}

\DeclareMathOperator{\graph}{graph}

\newcommand{\uno}{\bm{1}}

\newcommand{\Hcal}{\mathcal{H}}

\newcommand{\Kcal}{\mathcal{K}}

\newcommand{\vbf}{\mathbf{v}}

\newcommand{\Rbb}{\mathbb{R}}
\newcommand{\Sbb}{\mathbb{S}}

\newcommand{\Tmax}{T_{\mathrm{max}}}

\newcommand{\oOm}{\overline{\Omega}}

\title[On the Gage--Hamilton--Grayson Theorem for free boundary curves]{On the Gage--Hamilton--Grayson Theorem for free boundary curves} 

\author[Yueheng Bao]{Yueheng Bao}
\address{University of Toronto, Department of Mathematics, 40 St George Street, Toronto, ON M5S 2E4, Canada}
\email{yueheng.bao@mail.utoronto.ca}

\author[Lorenzo Sarnataro]{Lorenzo Sarnataro}
\address{University of Toronto, Department of Mathematics, 40 St George Street, Toronto, ON M5S 2E4, Canada}
\email{lorenzo.sarnataro@utoronto.ca}

\begin{document}
\begin{abstract}
We provide an alternative and independent proof of the analogue of the Gage--Hamilton--Grayson Theorem for embedded free boundary curve shortening flows in a convex planar domain established by Langford--Zhu \cite{lz23}. Our proof is based on blowup arguments introduced by B. White in his work on mean-convex mean curvature flow. 
\end{abstract}
\maketitle
\thispagestyle{empty}
\vspace{-0.5cm}
\section{Introduction}\label{sec:introduction}
Curve shortening flow evolves a planar curve with normal velocity equal to its curvature vector. The Gage--Hamilton--Grayson theorem asserts that every embedded closed planar curve becomes convex and shrinks to a round point in finite time \cite{gh86,grayson87}.

Free boundary curve shortening flow evolves a properly embedded arc in a smooth bounded convex planar domain $\Omega$, with its endpoints constrained to lie on $\partial\Omega$ and the arc meeting $\partial\Omega$ orthogonally throughout the evolution. Langford and Zhu \cite{lz23} proved the free boundary analogue of the Gage--Hamilton--Grayson theorem: the flow either converges in infinite time to a single free boundary chord (namely, a line segment joining two points of $\partial \Omega$ and meeting $\partial \Omega$ orthogonally) or shrinks in finite time to a boundary point and becomes asymptotically semicircular. A key ingredient of their proof is a distance comparison principle, which is obtained by a long and sophisticated computation.

We give an alternative, less computationally heavy proof, based on the local blow-up argument in White's notes \cite[Sections~6--7]{White_lectures}, which is inspired by White's groundbreaking work on mean-convex mean curvature flow \cite{whi-size,whi-nature}. It is worth pointing out that the techniques in \cite{whi-size,whi-nature} were adapted to free boundary mean-convex flows in \cite{FB-mean-convex}. Our application is limited to free boundary flows of curves in convex domains in $\mathbb{R}^2$ but does not require the mean-convexity assumption. Our main result is the following.
\begin{theorem}\label{thm:main}
Let $\Omega\subset\Rbb^2$ be a nonempty bounded open convex domain with smooth boundary $S=\partial\Omega$. Let $\Gamma_0\subset\oOm$ be a properly embedded smooth arc meeting $S$ orthogonally, and let $\Gamma=\{\Gamma_t\}_{t\in[0,\Tmax)}$ be its maximal smooth free boundary curve shortening flow. Exactly one of the following holds:
\begin{enumerate}[label=(\roman*)]
\item $\Tmax=\infty$, and $\Gamma_t$ converges smoothly to a single chord meeting $S$ orthogonally at both endpoints;
\item $\Tmax<\infty$, and there is $p_0\in S$ such that
\begin{equation}
 \frac{\Gamma_t-p_0}{\sqrt{2(\Tmax-t)}}\to\Sbb^1_+
 \quad\text{smoothly as }t\uparrow\Tmax,
\end{equation}
where $\Sbb^1_+$ is the unit semicircle centred at the origin in $T_{p_0}\oOm$.
\end{enumerate}
In particular, in case \textup{(ii)}, the whole curve shrinks to $p_0$ and
 $\sqrt{2(\Tmax-t)}\,\|\kappa(\cdot,t)\|_{L^\infty(\Gamma_t)}\to 1$,
where $\kappa(\cdot,t)$ denotes the curvature of $\Gamma_t$.
\end{theorem}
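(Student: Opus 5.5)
The plan is to follow White's local blow-up strategy \cite[Sections~6--7]{White_lectures} instead of a distance comparison principle. First I will set up the basic free boundary theory. This means short-time existence, and the fact that $\Tmax<\infty$ forces $\sup_{\Gamma_t}|\kappa|\to\infty$. I also need two monotonicity facts that use only convexity of $\Omega$. The length $L(\Gamma_t)$ is nonincreasing, with $\frac{d}{dt}L=-\int_{\Gamma_t}\kappa^2$; this holds because at the free boundary the endpoint velocity is tangent to $S$, so no boundary term appears. The number of inflection points (zeros of $\kappa$) is nonincreasing by Sturm--Angenent theory. At the boundary, orthogonality gives $\partial_s\kappa=\pm\kappa_S\kappa$, where $\kappa_S\ge0$ is the curvature of $S$. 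This Robin condition keeps zeros from entering through the endpoints.

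The infinite-time case comes next. If $\Tmax=\infty$, then $\int_0^\infty\!\int_{\Gamma_t}\kappa^2<\infty$. I will first show curvature stays bounded along the flow. Then parabolic estimates with the Neumann-type boundary condition give subsequential smooth convergence to a free boundary geodesic, i.e.\ a chord orthogonal to $S$. A {\L}ojasiewicz-type argument would upgrade this to full convergence. A simpler route is that chords orthogonal to $S$ at both ends are isolated critical points of length in a convex domain, and the limit set is connected.

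The heart of the proof is the blow-up analysis at a singular time $\Tmax<\infty$. I will reflect across $S$ near a boundary point, or use Stahl's boundary version of Huisken's monotonicity formula, to get a Gaussian density that is monotone up to errors from $\kappa_S$. Type~I rescalings at a singular point $(p_0,\Tmax)$ then converge to self-shrinkers. In the interior these are lines or Abresch--Langer curves; at the boundary they are free boundary shrinkers in a half-plane. Embeddedness together with the classification of embedded shrinkers leaves only the line, the circle, the half-line through the origin, or the semicircle $\Sbb^1_+$. A priori the singularity may be type~II, however. To rule this out I will use White's argument: take a point-picking blow-up at points of nearly maximal curvature. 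The limit is an eternal embedded flow, possibly with a free boundary on a line, with bounded curvature attaining its maximum. By Hamilton's Harnack inequality such a flow is a translator, i.e.\ a grim reaper. The half-plane case I handle by reflection, which gives a reaper symmetric about the boundary line. The main obstacle is excluding the grim reaper. For this I will use White's ``sheeting'' idea: a grim reaper has two parallel sheets at distance $\approx\pi/\sup\kappa$ that are joined far away. Here I would try Grayson's chord--arc/isoperimetric control via Huisken's monotonicity on the doubled curve: density at most $2$ on the reflected closed curve, plus bounded inflection count, forbids two nearly parallel sheets that are close in the ambient space but far apart intrinsically. The embeddedness of the finite-length free boundary curve should yield this bound on the doubled curve. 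If this fails, the fallback is to adapt White's $\delta$-neighbourhood distance lemma, which uses only embeddedness and the avoidance principle with chords orthogonal to $S$ as barriers.

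Once type~II is excluded, interior singular points would produce a circle or line tangent flow. Lines have density $1$, so they are regular. A circle would be closed and cannot appear because the arc has endpoints on $S$. Hence the singularity occurs at $p_0\in S$ with tangent flow $\Sbb^1_+$. Uniqueness of the tangent flow follows since it is compact and multiplicity one, so the convergence is smooth by parabolic regularity. This gives $\Gamma_t\to p_0$, and the curvature asymptotics follow from the smooth convergence to the unit semicircle. Finally, (i) and (ii) are exclusive by their definition through $\Tmax$.
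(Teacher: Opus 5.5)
Your proposal has a genuine gap at the step you yourself flag as the main obstacle: nothing you propose actually excludes the grim reaper, or equivalently higher-multiplicity static tangent flows.

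\textbf{Excluding the grim reaper.} A Gaussian density bound cannot do this job. The grim reaper has entropy exactly $2$, the same as a multiplicity-two line. Moreover, embeddedness gives no a priori density bound, since an embedded arc can fold back on itself many times. Reflecting across a curved $S$ also does not produce a curve shortening flow. Huisken's chord--arc estimate is a distance comparison principle, which is precisely the long free-boundary computation of Langford--Zhu that this paper is designed to avoid. Your ``fallback'' is not an argument.

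\textbf{Other holes in the finite-time part.} Hamilton's Harnack rigidity needs a strictly convex eternal limit, and you never establish convexity of the blow-up limit. At a boundary point, the Brakke limit may be the static boundary line $P$ with any multiplicity $q\ge1$ (the arc collapsing onto $S$). This case is missing from your list, and it is excluded only by using convexity of $\Omega$. Similarly, ``lines have density $1$'' presupposes multiplicity one.

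\textbf{The missing idea.} What you need is White's direct multiplicity-one argument, which makes type~II analysis unnecessary altogether:
\begin{enumerate}[label=(\alph*)]
\item Obtain $C^{1,\alpha}$ sheeting at good times from the $L^2$ curvature bound \eqref{eq:defect}.
\item Propagate graphicality between good times by counting intersections with fixed transverse segments (Lemma~\ref{lem:zero}); this gives Proposition~\ref{prop:sheeting}.
\item Take the normalised gap: between the outermost sheets, or between $P$ and the nearest sheet (positive by convexity), or between the two endpoints when $\Sigma=L_+$. It converges to a nonnegative heat solution with $w(0,-1)=1$ and $w(0,\tau)\le\sqrt{-\tau}$, which Lemma~\ref{lem:collapse} forbids.
\end{enumerate}
Density-one local regularity then leaves only the semicircle.

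\textbf{The infinite-time case.} This part has a second gap. Orthogonal chords are not isolated critical points in general. In a disk every diameter is one, and a domain of constant width has one in every direction. So connectedness of the limit set gives nothing here. A {\L}ojasiewicz argument would require $S$ to be analytic, whereas it is only assumed smooth.

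The paper instead uses convexity directly. While the left endpoint stays in the limit set $K$, the tangent line to $S$ at the far end of the limit chord gives $L-L_\infty\le-\frac{L(0)^2}{2}L'$. This integrates to \eqref{eq:main-estimate-uniqueness}. That estimate controls the growth of $\int_{\Gamma_t}h^2\,ds$ above a limit chord and forbids $K$ from containing an arc.

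Your assertion that the curvature stays bounded for all time is also unproved. The paper sidesteps it by choosing times with $\int\kappa^2\,ds\to0$ and then applying parabolic regularity.
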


\subsection{Outline}
The proof of Theorem \ref{thm:main} is divided into two cases, depending on the long-time behaviour of the flow. Whereas curve shortening flow of closed embedded planar curves always goes extinct in finite time, \emph{free boundary} curve shortening flow of an embedded arc in $\overline{\Omega}\subset \mathbb{R}^2$ can exist for all time and converge to a free boundary chord, as in case (i) of Theorem \ref{thm:main}.

We devote Section \ref{sec:immortal} to this case. We first prove that along any sequence of times going to infinity the flow converges subsequentially to a free boundary chord, and then that this chord is independent of the subsequence.\\
The existence of subsequential limits follows from the observation that the space-time $L^2$-norm of the geodesic curvature of a curve shortening flow is bounded on any compact set, which, by the Sobolev embedding theorem, implies local precompactness of the curve parametrisations in $C^{1, \alpha}$ (for $\alpha\in(0, 1/2)$) at almost every time. This use of the one-dimensional Sobolev embedding is a distinguishing feature of curve shortening flow (as opposed to higher-dimensional mean curvature flow), appears prominently in White's proof of Grayson's theorem in \cite{White_lectures}, and is used extensively throughout this paper. Uniqueness of the limiting chord is more subtle, and requires the new estimate \eqref{eq:main-estimate-uniqueness}, which allows us to control the distance from any fixed subsequential limit chord by the difference in length. As the limiting length is unique by the length-decreasing property of the flow, so must be the limiting chord.

The rest of the paper is then devoted to the analysis of case (ii), namely of the behaviour of the flow at the first singular time. By using the self-shrinking property of tangent flows to free boundary curve shortening flows (which is a consequence of Edelen's monotonicity formula \cite[Theorem 5.1]{edelen18}), the aforementioned Sobolev embedding, and Abresch--Langer's classification of smooth immersed self-shrinking curves in the plane \cite{al86}, we classify the possible tangent flows in Proposition \ref{prop:tangent-flow-classification} up to multiplicity. The shrinking semicircle already has multiplicity one; the remaining models are static at negative times.

The next step is to rule out higher multiplicity in these remaining models and to exclude the boundary tangent line altogether. In analogy with \cite{whi-size,whi-nature}, the key is to obtain a \emph{sheeting} result (cf. Proposition \ref{prop:sheeting}), namely, to show that, if a tangent flow appears with higher multiplicity, then the parabolically rescaled flows can be represented locally as graphs over the limit, with each sheet converging to the limit in $C^1$. We first prove graphicality at \emph{good times} (cf. Remark \ref{rmk:good-times}), where the Sobolev embedding already provides $C^{1, \alpha}$ precompactness of the parametrisations, and then extend it between good times by using a variation of White's intersection argument in \cite[Theorem 7.2, Lemma 7.4]{White_lectures}, which exploits the monotonicity properties of the number of zeros of solutions to parabolic equations in one space variable (see Lemma \ref{lem:zero}). Roughly speaking, we control graphicality by counting intersections with transverse segments (which can be seen as stationary solutions of curve shortening flow): an equal number of intersections at two distinct times $a<b$ (e.g. two good times where we already know graphicality) implies a constant intersection number throughout the interval $[a,b]$, therefore extending graphicality from pairs of good times to all the times in between.

Similarly to the proof of Theorem 9.2 in \cite{whi-size}, given the sheeting results, the final contradiction to higher multiplicity comes from considering the maximal difference in height between graphical sheets, appropriately rescaled, so that it has value 1 at time $\tau=-1$ and approaches $0$ as $\tau\uparrow 0$. By direct comparison (cf. Lemma \ref{lem:collapse}), this behaviour is impossible for positive solutions of linear parabolic PDEs converging to the heat equation, completing the contradiction.
Moreover, by convexity of the domain, the tangent line to the boundary at a boundary point is a static solution of curve shortening flow which lies strictly on one side of the original flow, so a similar mechanism also rules it out as a (quasi)-static tangent flow, completing the classification.

Finally, the local regularity theorems of White \cite{whi05localreg} and Edelen \cite[Corollary~8.8]{edelen18} exclude multiplicity-one interior lines and orthogonal half-lines at singular points and therefore conclude the proof of Theorem \ref{thm:main}. 

\subsection*{Acknowledgements}
The authors would like to thank Robert Haslhofer for suggesting this project. YB has been supported by a Blyth Fellowship, a Mary H. Beatty Fellowship, a Department of Mathematics Graduate Program Award and an International Graduate Student Scholarship in Mathematics from the University of Toronto.

\section{Preliminaries}\label{sec:preliminaries}
Throughout this paper, $\Omega\subset\Rbb^2$ is a nonempty bounded open convex domain with smooth boundary $S=\partial\Omega$, and $\nu_S$ is its outward unit normal. Consider a smooth family of proper embeddings
\[
\gamma:[0,1]\times[0,T)\longrightarrow\oOm,
\qquad \gamma_t(x)=\gamma(x,t),\qquad \Gamma_t=\gamma_t([0,1]).
\]
Properness means that $\gamma_t((0,1))\subset\Omega$ and $\Gamma_t\cap S=\gamma_t(\{0,1\})$. Let $T_t=\partial_x\gamma/|\partial_x\gamma|$ be the unit tangent vector to $\gamma_t$.

\begin{definition}
	The family $\Gamma=\{\Gamma_t\}_{t\in[0,T)}$ is a \textit{free boundary curve shortening flow} in $\Omega$ if
	\begin{equation}\label{eq:definition-FBCSF}
		\begin{cases}
			\partial_t\gamma(x,t)=-\kappa(x,t)\nu(x,t),
			&x\in[0,1],\ t\in(0,T),\\
			\gamma_t(\{0,1\})\subset S,\quad \nu\perp\nu_S,
			&x\in\{0,1\},\ t\in[0,T).
		\end{cases}
	\end{equation}
	where $\kappa(x,t)$ denotes the curvature of $\Gamma_t$ at $\gamma(x,t)$ with respect to the unit normal field $\nu$, and $\nu_S$ is the outward unit normal to $S$. Notice that the evolution equation can be written as $\partial_t\gamma= \partial^2_s\gamma$, where $s$ denotes the arclength parameter.
\end{definition}
Write $p_-(t)=\gamma(0,t)$ and $p_+(t)=\gamma(1, t)$ for the endpoints of $\Gamma_t$. The orthogonality condition in \eqref{eq:definition-FBCSF} gives
\begin{equation}\label{eq:endpoint-tangents}
	T_t(p_-(t))=-\nu_S(p_-(t)),\qquad
	T_t(p_+(t))=\nu_S(p_+(t)).
\end{equation}
Thus, the outward unit conormal of $\Gamma_t$ is given by $\nu_S$ at both endpoints. 

Stahl’s local existence and uniqueness theory  \cite[Section 2]{stahl96} shows that, given a smooth properly embedded arc meeting $S$ orthogonally at its endpoints as the initial condition, there is always a unique properly embedded solution of \eqref{eq:definition-FBCSF} on $[0, T)$ for some $T>0$. Moreover, under the assumption of convexity of the domain $\Omega$, such a solution is \emph{proper} for as long as it exists. 

We next give a simple proof of preservation of properness in our convex planar setting; see \cite[Corollary~4.5, p.~395]{stahl96} for a more general result valid in any dimension.
\begin{lemma}[Preservation of properness]\label{lem:properness}
Let $\gamma:[0,1]\times[0,T)\to\Rbb^2$ be a family of smooth embeddings satisfying \eqref{eq:definition-FBCSF}. If $\gamma_0((0,1))\subset\Omega$, then $\gamma_t((0,1))\subset\Omega$ for every $t<T$. In particular, $\gamma_t^{-1}(S)=\{0,1\}$.
\end{lemma}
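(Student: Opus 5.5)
We argue by contradiction with a first-touching-time argument, using convexity of $\Omega$ through a supporting line at the touching point. Suppose the conclusion fails, and let
\[
t_0=\inf\{t\in[0,T):\gamma_t((0,1))\not\subset\Omega\}.
\]
Since the boundary condition \eqref{eq:definition-FBCSF} only constrains the endpoints, each $\gamma_t$ takes values in $\Rbb^2$, and "touching" means an interior point of the arc reaching $S$ (or leaving $\oOm$).

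\textbf{Step 1: no touching near the endpoints.} Before treating interior points, we show that points near the endpoints stay inside $\Omega$ uniformly on compact time intervals $[0,t_1]\subset[0,T)$. At $x=0$ the tangent satisfies $T_t=-\nu_S$ by \eqref{eq:endpoint-tangents}. Hence, for small $x>0$,
\[
\langle \gamma(x,t)-p_-(t),\nu_S(p_-(t))\rangle = -|\partial_x\gamma(0,t)|\,x+O(x^2),
\]
uniformly in $t\in[0,t_1]$ by smoothness. Locally $S$ is a graph over its tangent line with bounded second fundamental form, and the arc enters perpendicularly. So there is $\delta>0$, uniform in $t\le t_1$, with $\gamma_t((0,\delta])\subset\Omega$, and similarly near $x=1$. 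Together with compactness of $[\delta,1-\delta]$ and $\gamma_0([\delta,1-\delta])\subset\Omega$, this gives $t_0>0$. It also shows that if $t_0<T$, then at time $t_0$ there is $x_0\in[\delta,1-\delta]$ with $\gamma(x_0,t_0)=q\in S$, while $\gamma_t((0,1))\subset\Omega$ for $t<t_0$ and $\gamma_{t_0}([0,1])\subset\oOm$ by continuity.

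\textbf{Step 2: the supporting-line maximum principle.} Let $u(x,t)=\langle \gamma(x,t)-q,\nu_S(q)\rangle$. By convexity, $\oOm$ lies in $\{\langle y-q,\nu_S(q)\rangle\le 0\}$. So $u\le 0$ on $[0,1]\times[0,t_0]$, with $u(x_0,t_0)=0$ an interior maximum. Since $\partial_t\gamma=\partial_s^2\gamma$, the function $u$ satisfies the linear heat-type equation $\partial_t u=\partial_s^2 u$, uniformly parabolic in the $x$ variable on $[\delta,1-\delta]\times[t_0-\epsilon,t_0]$.

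The strong maximum principle then forces $u\equiv0$ on $[\delta',1-\delta']\times[t_0-\epsilon,t_0]$ for a neighbourhood in $x$. More precisely, the zero set of $u$ at time $t_0$ is open and closed within the connected component of $\{x\in(0,1):u(x,t_0)\le0\}$ containing $x_0$, propagated backward in time. Hence a whole segment of $\Gamma_t$ for $t<t_0$ lies on the supporting line, and that segment lies in $\oOm$. But the supporting line meets $\oOm$ only in $\partial\Omega$ when $\Omega$ is strictly convex, and in a segment of $S$ in general. Either way, points of $\gamma_t((0,1))$ with $t<t_0$ would lie on $S$, contradicting the definition of $t_0$.

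The final claim $\gamma_t^{-1}(S)=\{0,1\}$ then follows immediately from the boundary condition together with interior containment.

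\textbf{Main obstacle.} The delicate step is the uniform control near the endpoints in Step 1. The maximum principle cannot be applied there, because $u$ need not be maximal at a boundary point of the parameter domain. The perpendicular contact condition is exactly what rules out interior points of the arc accumulating on $S$ at the endpoints.
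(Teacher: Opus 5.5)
Your argument is correct, but it is organised differently from the paper's. The paper uses no first touching time. For every fixed $q\in S$ it considers $h_q=\langle q-\gamma,\nu_S(q)\rangle$ (that is, $-u$ in your notation) on the whole parameter rectangle $[0,1]\times[0,t_1]$. The parabolic boundary values of $h_q$ already have the right sign: $h_q(0,t),h_q(1,t)\ge 0$ because the endpoints lie on $S\subset\oOm$, which sits in the supporting half-plane, and $h_q(\cdot,0)>0$ on $(0,1)$ by initial properness. The weak maximum principle then gives $h_q\ge 0$, and intersecting these half-planes over all $q$ yields $\Gamma_t\subset\oOm$ directly. The strong maximum principle, propagated back to $t=0$, gives $h_q>0$ on $(0,1)\times(0,t_1]$, which excludes interior points lying on $S$.

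In that formulation the endpoints of the parameter interval are simply part of the parabolic boundary, so the ``main obstacle'' you identify never arises. Your Step~1 (Taylor expansion at the endpoints plus a uniform local graph description of $S$) and the limiting argument producing $x_0$ are needed only because the first-touching-time set-up requires $t_0>0$ and a touching parameter bounded away from $\{0,1\}$. A side benefit of the paper's route is that it uses only that the endpoints stay on $S$, never the orthogonality condition. Your Step~1, by contrast, relies on the arc entering $\Omega$ transversally. What your route offers is the familiar avoidance-principle structure, with convexity entering only through the supporting line at the single touching point.

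One small tidy-up: the ``More precisely'' sentence in Step~2 is muddled, since $u\le 0$ everywhere makes $\{u(\cdot,t_0)\le 0\}$ all of $[0,1]$. The clean statement is the parabolic strong maximum principle on $(0,1)\times(0,t_0]$. It gives $u\equiv 0$ there, hence $u(\cdot,0)\equiv 0$ on $(0,1)$, contradicting initial properness outright. Also, the open set $\Omega$ is disjoint from every supporting line, so the strictly convex versus general case distinction is unnecessary.
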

\begin{proof}
Fix $q\in S$ and set $h_q(x,t)=\langle q-\gamma(x,t),\nu_S(q)\rangle$. Since $\partial_t\gamma=\partial_s^2\gamma$, where $s$ denotes arclength, we have
\[
(\partial_t-\partial_s^2)h_q=0.
\]
By convexity of the domain $\Omega$, we have $h_q(0,t),h_q(1,t)\ge0$ for all $t$, while initial properness gives $h_q(x,0)>0$ for $x\in(0,1)$. On each compact time interval contained in $[0,T)$, the equation is uniformly parabolic, so, by the maximum principle, $h_q\ge0$ everywhere and $h_q(x,t)>0$ for $x\in(0,1)$ and $t>0$. Intersecting the supporting half-planes gives $\Gamma_t\subset\oOm$. If $\gamma(x,t)=q\in S$ for some $x\in(0,1)$, then $h_q(x,t)=0$, a contradiction. Thus $\gamma_t((0,1))\subset\Omega$.
\end{proof}
By uniqueness and Lemma \ref{lem:properness}, the solution extends to a maximal properly embedded flow on $[0,\Tmax)$, with $\Tmax\in(0,+\infty]$. Moreover, if $\Tmax<+\infty$, the continuation criterion in \cite[Theorem~1.3]{stahl96} gives
\begin{equation}\label{eq:curvature-blowup}
	\|\kappa(\cdot, t)\|_{L^\infty(\Gamma_t)}^2\to +\infty \text{ as } t\uparrow\Tmax.
\end{equation}

The first variation of length and the boundary orthogonality condition in \eqref{eq:definition-FBCSF} imply that for all $t\in(0, \Tmax)$
\begin{equation}\label{eq:derivative-length}
	\frac{d}{dt}\Hcal^1(\Gamma_t)=-\int_{\Gamma_t}\kappa^2\, ds\leq0.
\end{equation}
Hence, length is non-increasing along the flow.

In order to study singularities of the flow, it is useful to have a notion of weak solution of \eqref{eq:definition-FBCSF}. For this purpose we shall use the framework of \textit{free boundary integral Brakke flows}. 
We refer to \cite{edelen18} for the precise definition and properties of free boundary integral Brakke flows and to \cite{bao} for further foundational results and comparisons with other weak formulations.

If $\Gamma=\{\Gamma_t\}_{t\in[0,T)}$ is a free boundary curve shortening flow in $\Omega$ and we let $\mu_t\coloneqq \Hcal^1\mres\Gamma_t$, then $\{\mu_t\}_{t\in [0,T)}$ is a 1-dimensional free boundary integral Brakke flow in $\Omega$. For all $t\in[0,T)$, the integral 1-varifold $V_t$ associated to $\mu_t$ is simply the multiplicity-one varifold $|\Gamma_t|\coloneqq \vbf(\Gamma_t, \uno_{\Gamma_t})$ induced by $\Gamma_t$. To keep the notation simple, we shall use $\Gamma$ to denote both the smooth flow and the corresponding Brakke flow.

As mentioned in the introduction, we are going to make use of the following result, which says that the number of intersections of two compact free boundary curve shortening flows is non-increasing in time. By writing the flows locally as graphs, and seeing their intersections as zeros of the graphical difference, which satisfies a linear parabolic equation as in \cite{Angenent_survey}, it follows directly from the Sturm zero-number theorem in \cite{Angenent88} (see \cite{Lou19} for the boundary adaptation needed for free boundary flows).
\begin{lemma}[Intersection principle]\label{lem:zero}
	Let $I$ be an open time interval.
	Let $\Gamma_t^1,\Gamma_t^2\subset\Rbb^2$ be smooth families of compact embedded arcs with curvature-vector normal velocity. Each endpoint either lies on $S$ for every $t\in I$, with the arc orthogonal to $S$ and its adjacent subarc in $\Omega$, or remains disjoint from the other arc for every $t\in I$.
	Suppose also that
	\[
	\partial\Gamma_t^1\cap(\Gamma_t^2)^\circ
	=\partial\Gamma_t^2\cap(\Gamma_t^1)^\circ=\emptyset,
	\quad t\in I.
	\]
	Assume that the arcs share no subarc of positive length at any time. Then the total number of intersections
	\[
	N(t)=\#(\Gamma_t^1\cap\Gamma_t^2)
	\]
	is finite and non-increasing, and decreases strictly across an interior tangency or a common endpoint. 
	
	In particular, if all intersections at $t_1,t_2\in I$, $t_1<t_2$, are transverse and interior, and $N(t_1)=N(t_2)$, then every intersection on $[t_1,t_2]$ is transverse and interior, and $N$ is constant there.
\end{lemma}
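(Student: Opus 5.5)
The plan is to reduce the statement to the Sturm zero-number theorem of \cite{Angenent88} applied to the difference of two local graphs. Near each point of $\Gamma^1_{t}\cap\Gamma^2_{t}$ for $t\in I$, both arcs are written as graphs over a common line: over the tangent line of one arc at a transverse intersection or interior tangency, and over a line normal to $S$ at a common boundary endpoint. Two curves moving by curvature satisfy, in graphical form, $u_t = u_{xx}/(1+u_x^2)$. Subtracting the equations for $u^1$ and $u^2$ gives a linear equation for the difference,
\[
w_t = a(x,t)\,w_{xx} + b(x,t)\,w_x ,
\]
with smooth bounded coefficients and $a$ bounded below by a positive constant. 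Intersections of the two arcs correspond to zeros of $w$, and tangencies correspond to multiple zeros. Since the arcs share no subarc of positive length, $w\not\equiv0$ on any local chart, and unique continuation for this equation then prevents $w$ from vanishing identically at later times.

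With this in hand, I would apply the theorem of \cite{Angenent88} on each chart. For $t$ in the interior of $I$, the zero set of $w(\cdot,t)$ is finite. The number of zeros is non-increasing, and it drops strictly whenever $w$ has a multiple zero. A compactness and covering argument then shows that intersections stay in finitely many charts on any compact subinterval of $I$, so $N(t)$ is finite and locally non-increasing, hence non-increasing on $I$. The hypothesis $\partial\Gamma^1_t\cap(\Gamma^2_t)^\circ=\emptyset$, together with its symmetric version, ensures that zeros can neither enter nor exit through the lateral boundary of a chart. The only exceptions are endpoints lying on $S$, and these are handled next.

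The main obstacle is the boundary case, where both arcs have an endpoint on $S$ meeting it orthogonally. There I would straighten $S$ locally by a diffeomorphism that preserves orthogonality, for example Fermi coordinates along $S$. In these coordinates both arcs are graphs over a segment $[0,\delta)$ normal to $S$, and the free boundary condition becomes a Neumann condition $w_x(0,t)=0$ in the straightened picture, possibly with a lower order perturbation. The zero-number theory under Neumann conditions, the adaptation in \cite{Lou19}, then applies: a zero of $w$ at $x=0$ is a common endpoint, and since $w_x(0,t)=0$ such a zero is degenerate, so the count drops strictly as it passes. An endpoint that stays disjoint from the other arc is handled by restricting charts to avoid it. Convexity of $\Omega$ and Lemma \ref{lem:properness} keep the interiors of both arcs in $\Omega$, so boundary zeros can arise only at endpoints.

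For the final assertion, suppose every intersection at $t_1$ and $t_2$ is transverse and interior, and $N(t_1)=N(t_2)$. Monotonicity forces $N$ to be constant on $[t_1,t_2]$. A tangency or common endpoint at some intermediate time $t$ would make $N$ drop strictly across $t$, which contradicts constancy. Hence every intersection on $[t_1,t_2]$ is transverse and interior.
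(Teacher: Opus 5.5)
Your proposal is correct and follows essentially the same route as the paper. The paper justifies the lemma in one line: write the arcs locally as graphs, observe that the graphical difference solves a linear parabolic equation, and apply Angenent's Sturm zero-number theorem, using Lou's Neumann-boundary adaptation for common endpoints on $S$. You spell out the covering argument and the Fermi-chart Neumann reduction in more detail; one small correction is that Lemma~\ref{lem:properness} does not apply to general arcs such as the static segments used later, but the lemma's own hypothesis (the subarc adjacent to an endpoint on $S$ lies in $\Omega$) already gives what you need at the boundary.
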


%
%
%

\section{The immortal case}\label{sec:immortal}
We begin with the case when $\Tmax=\infty$. In this case, the flow converges smoothly to an orthogonal chord. The proof of subsequential convergence is rather standard; however, the proof of uniqueness of the limit is more involved; the key ingredient in the uniqueness argument is the elementary estimate in \eqref{eq:main-estimate-uniqueness}.

\begin{proposition}\label{prop:immortal}
	If $\Tmax=\infty$, then $\Gamma_t$ converges smoothly to a single orthogonal chord.
\end{proposition}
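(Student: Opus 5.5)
Since length is non-increasing by \eqref{eq:derivative-length} and bounded below (every proper arc joining two points of $S$ has positive length, and the curve stays in $\oOm$ by Lemma~\ref{lem:properness}), the limit $L_\infty=\lim_{t\to\infty}\Hcal^1(\Gamma_t)$ exists. Integrating \eqref{eq:derivative-length} gives
\[
\int_0^\infty\int_{\Gamma_t}\kappa^2\,ds\,dt<\infty .
\]
Hence for every $\tau>0$ we have $\int_T^{T+\tau}\int_{\Gamma_t}\kappa^2\to0$ as $T\to\infty$.

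\textbf{Subsequential convergence.} Take any sequence $t_j\to\infty$ and consider the time-translated flows $\Gamma^j_t=\Gamma_{t_j+t}$ for $t\in[-1,1]$. Their space-time $L^2$ curvature norm tends to zero. Parametrise by constant speed (arclength rescaled to $[0,1]$); the lengths are bounded above and below. For almost every $t$, along a subsequence $\int\kappa^2\to0$ at that time. By the one-dimensional Sobolev embedding, the tangent $T$ is then $C^{0,1/2}$-close to a constant, so the curves converge in $C^{1,\alpha}$ to a segment with endpoints on $S$. The boundary condition \eqref{eq:endpoint-tangents} passes to the limit, so the segment meets $S$ orthogonally: it is a free boundary chord of length $L_\infty$. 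To upgrade to smooth convergence, I would use local regularity (White \cite{whi05localreg} in the interior, Edelen \cite[Corollary~8.8]{edelen18} at the boundary). The limit Brakke flow is a static multiplicity-one chord, since length converges and the Gaussian densities are at most $1$, or $1/2$ at the boundary. This gives smooth convergence of $\Gamma^j$ on $[-1/2,1/2]$, and hence of $\Gamma_{t_j}$ with uniform curvature bounds.

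\textbf{Uniqueness.} This is the main obstacle, since the chords of length $L_\infty$ could a priori form a continuum, e.g.\ for a disk every diameter is a chord. I would aim for an estimate of the form
\[
\dist(\Gamma_t,C)^2\le K\bigl(\Hcal^1(\Gamma_t)-L_\infty\bigr)+(\text{something small})
\]
for a fixed limit chord $C$ and $t$ large. More naturally, I would use a Łojasiewicz-type argument:
\[
\frac{d}{dt}\bigl(\Hcal^1(\Gamma_t)-L_\infty\bigr)^{1/2}\le -c\,\Bigl(\int\kappa^2\Bigr)^{1/2}
\qquad\text{whenever }\Gamma_t\text{ is } C^1\text{-close to some chord}.
\]
This would follow from a gradient inequality $\Hcal^1(\Gamma)-L_\infty\le K\int_\Gamma\kappa^2$ for curves near chords. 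Since $|\partial_t\gamma|=|\kappa|$, integrating would bound $\int_{t_0}^\infty\|\kappa\|_{L^1}\,dt$, and hence the total displacement, by a quantity tending to zero. A standard trapping argument then gives convergence.

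The gradient inequality is elementary in the plane. For a curve near a chord, the tangent angle $\theta$ satisfies $\theta'=\kappa$, so its oscillation is at most $\sqrt{L}\,\|\kappa\|_{L^2}$. The curve is then within $O(\|\kappa\|_{L^2})$ in $C^1$ of a segment orthogonal to $S$ at one endpoint. Comparing with the nearby free boundary chord, chord lengths vary quadratically in the endpoint position because chords are critical points of the length among arcs with endpoints on $S$. This should give $\Hcal^1-L_\infty\le K\int\kappa^2$, up to the degenerate case where nearby chords all have length $L_\infty$, which the quadratic bound handles anyway. I expect the care here to be in handling chords that are degenerate critical points. That is why I would phrase the estimate directly via the length difference, which is what \eqref{eq:main-estimate-uniqueness} presumably records.
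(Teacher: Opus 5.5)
\textbf{Uniqueness step.} This is where the proposal has a genuine gap. Your elementary identity $L-\langle p_+-p_-,e\rangle=\frac12\int_0^L|T-T(0)|^2\,ds\le\frac{L^2}{2}\int\kappa^2\,ds$, with $e=-\nu_S(p_-)$, is correct, and it is exactly what the paper uses.

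The next step is the trouble. You claim a gradient inequality $\Hcal^1(\Gamma)-L_\infty\le K\int_\Gamma\kappa^2\,ds$ for every free boundary arc $C^1$-close to a limit chord. This is a {\L}ojasiewicz inequality with exponent $1/2$, and it is false in general. Let $\ell(s)$ be the length of the inward normal chord from $p(s)\in S$. Criticality of a chord only gives $\ell(s)-L_\infty=O(s^2)$ in terms of the endpoint displacement $s$. To bound this by $\int\kappa^2$ you need something like $s\lesssim\|\kappa\|_{L^2}$, which is a nondegeneracy condition. (The case you flag, where all nearby chords have length $L_\infty$ as for the disc, is the harmless Morse--Bott case.)

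Smooth convex domains can have isolated degenerate orthogonal chords, say with $\ell(s)=L_\infty+s^4$. The orthogonality defect at the far end is then $\epsilon\sim|\ell'(s)|\sim s^3$. An arc leaving $p(s)$ along the normal and bending by $O(\epsilon)$ to meet $S$ orthogonally has $\int\kappa^2=O(s^6)$. Its length, however, is at least $\ell(s)-O(\epsilon^2)=L_\infty+s^4-O(s^6)$, so your inequality fails for every fixed $K$. If instead $\ell(s)-L_\infty=e^{-1/s^2}$, no {\L}ojasiewicz inequality of any exponent holds. So a trapping argument built on such an inequality cannot close in general.

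\textbf{The missing idea.} Use the estimate only at times when $p_-(t)$ lies in the compact connected set $K$ of subsequential limits of the left endpoint. For $q\in K$, the subsequential limit chord $\sigma_q$ from $q$ is an exact orthogonal chord of length $L_\infty$. The supporting line of $S$ at its far end then gives $\langle p_+(t)-q,e(q)\rangle\le L_\infty$ with no error term. Hence $L-L_\infty\le\frac{L(0)^2}{2}(-L')$, and $\int_a^b\sqrt{-L'}\,dt\le\sqrt2\,L(0)\sqrt{L(a)-L_\infty}$ whenever $p_-(t)\in K$ on $[a,b]$.

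Uniqueness then follows by contradiction rather than by trapping. Suppose $K$ were a nontrivial arc. Pick $q_0$ in its relative interior and a small arc $J$ about $q_0$ with $\overline J$ in the relative interior of $K$. The flow must travel from near $\sigma_{q_0}$ to $\partial J$ while $p_-\in K$. Set $F=\int_{\Gamma_t}h^2\,ds$, with $h$ the height over the line of $\sigma_{q_0}$. Along the way $\sqrt F$ grows by at most $\int\sqrt{-L'}\,dt\to0$. But at the exit time $p_-$ sits at a fixed positive height $\delta$ from that line, forcing $F\ge\delta^3/8$. No nondegeneracy of chords enters.

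\textbf{A smaller omission.} You use that lengths are bounded below, but you never prove $L_\infty>0$. This needs an argument; the paper compares with small shrinking half-circles.
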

\begin{proof}
	For simplicity, let $L(t)\coloneqq \mathcal{H}^1(\Gamma_t)$ for $t\in[0, \Tmax)$. Since length is non-increasing along the flow by \eqref{eq:derivative-length},  $L(t)$ has a limit $L_\infty\ge0$ as $t\to \infty$.
	
	We first notice that the limit length $L_\infty$ cannot be 0. Indeed, if $L_\infty$ were zero, then, for large $t$, the diameter of $\Gamma_t$ would be arbitrarily small, and $\Gamma_t$ would be forced to go extinct in finite time by comparison with a sufficiently small shrinking half-circle. Hence $L_\infty>0$.
	
	Now let $t_i\to\infty$ be arbitrary and set $\Gamma^i_\tau=\Gamma_{t_i+\tau}$. Since $L(t)\to L_\infty$, \eqref{eq:derivative-length} implies
	\begin{equation}\label{eq:immortal-translate}
		\int_{-2}^{-1}\int_{\Gamma^i_\tau}\kappa^2\,ds\,d\tau
		=L(t_i-2)-L(t_i-1)\to 0.
	\end{equation}
	By a standard real analysis argument, there exists $\tau_*\in(-2,-1)$ with $\int_{\Gamma_{t_i+\tau_*}}\kappa^2\,ds\to0$. After constant-speed reparametrisation, we have
	\[
	|\gamma_i'|=L(t_i+\tau_*),\quad
	\int_0^1|\gamma_i''|^2\,dx
	=L(t_i+\tau_*)^3
	\int_{\Gamma_{t_i+\tau_*}}\kappa^2\,ds\to 0.
	\]
	By the compact embedding $W^{2,2}(I)\Subset C^{1,\alpha}(I)$, $0<\alpha<1/2$, there is a subsequence of $(\gamma_i)_i$ converging to an affine parametrisation of a segment $\sigma$ of length $L_\infty$. Orthogonality passes to the limit. Thus $\sigma$ is a proper orthogonal chord. By parabolic regularity \cite{Lieberman},
	\begin{equation}\label{eq:immortal-subsequence}
		\Gamma_{t_i}\to\sigma\quad\text{smoothly along a subsequence}.
	\end{equation}
	
	It remains to prove uniqueness of the limiting chord. 
	Let $K=\bigcap_{T>0}\overline{\{p_-(t):t\ge T\}}\subset S$ be the set of all possible limit left endpoints and notice it is nonempty, compact, and connected. The key estimate (somewhat inspired by arguments of Lauer in \cite{Lauer13}) is the following:  if $p_-(t)\in K$ for all $t\in [a,b]\subset[0, \Tmax)$, then
	\begin{equation}\label{eq:main-estimate-uniqueness}
		\int_a^b\sqrt{-L'(t)}\, dt\leq \sqrt{2} L(0)\sqrt{L(a)-L_\infty}.
	\end{equation}
	To prove \eqref{eq:main-estimate-uniqueness}, for every $q\in K$, consider the orthogonal chord
	\[
	\sigma_q=[q,r_q],\quad
	r_q=q+L_\infty e(q),\quad e(q)=-\nu_S(q)=\nu_S(r_q).
	\]
	Whenever $p_-(t)=q\in K$, by looking at the tangent line to $S$ at $r_q$, we have by convexity 
	\[
	\langle p_+(t)-q,e(q)\rangle\le L_\infty.
	\]
	Therefore, since $|T_t(s)-T_t(0)|^2=2-2\langle T_t(s), T_t(0)\rangle$, we have
	\begin{equation}\label{eq:ODE-excess}
		L(t)-L_\infty \leq L(t)-\langle p_+(t)-q,e(q)\rangle=\frac12\int_0^{L(t)}|T_t(s)-T_t(0)|^2\,ds
		\leq -\frac{L(0)^2}{2} L'(t),
	\end{equation}
	where the last inequality follows from Cauchy--Schwarz. Integrating \eqref{eq:ODE-excess}, we obtain \eqref{eq:main-estimate-uniqueness}.
	
	We shall now show that if $K$ contains a nontrivial arc, then \eqref{eq:main-estimate-uniqueness} leads to a contradiction. Choose $q_0$ in the relative interior of $K$ and consider the orthogonal chord $\sigma_{q_0}=[q_0, r_0]$ starting at $q_0$; then fix a sufficiently small relatively open arc $J$ containing $q_0$, with $\overline J$ contained in the relative interior of $K$ and disjoint from $r_0$. By definition of $K$, there exists $a_i \to \infty$ such that $p_-(a_i)\to q_0$, and, after passing to a subsequence, $\Gamma_{a_i}\to \sigma_{q_0}$ smoothly. For all large $i$, let $b_i$ be the first time after $a_i$ such that $p_-(t)$ leaves $J$ and notice that $b_i$ is finite because $K\not\subset \overline{J}$. Then $p_-(b_i)\in \partial J$. 
	
	We will show that \eqref{eq:main-estimate-uniqueness} forces the curve's squared-height integral above $\sigma_{q_0}$ to tend to zero at $b_i$, whereas exiting $J$ implies a fixed positive lower bound, providing the desired contradiction. Fix a unit normal $n_0$ to the line containing $\sigma_{q_0}$, and set
	\[
	h(\cdot)=\langle\cdot-q_0, n_0\rangle,\quad F(t)=\int_{\Gamma_t}h^2\, ds.
	\]
	Since $\Gamma_{a_i}\to \sigma_{q_0}$ smoothly, $F(a_i)\to0$. By the orthogonality condition at the endpoints and Cauchy--Schwarz, we have
	\begin{equation}\label{eq:height-evolution}
		F'(t) =-2\int_{\Gamma_t}h\kappa\langle n_0, \nu\rangle\, ds -\int_{\Gamma_t}h^2\kappa^2\, ds\leq-2\int_{\Gamma_t}h\kappa\langle n_0, \nu\rangle\, ds\leq  2\sqrt{F(t)}\sqrt{-L'(t)}.
	\end{equation}
	Therefore, $F(b_i)^{1/2}\leq F(a_i)^{1/2}+\int_{a_i}^{b_i}\sqrt{-L'(t)}\, dt\to 0$ by \eqref{eq:main-estimate-uniqueness}.
	
	On the other hand, if $\tilde\gamma_{b_i}$ is an arclength parametrisation of $\Gamma_{b_i}$ starting at $p_-(b_i)$, there is $\delta>0$ such that $|h(p_-(b_i))|\geq\delta$, $|\partial_s h|\le1$, and $L(b_i)\ge L_\infty\ge\delta$, so that $|h(\tilde\gamma_{b_i}(s))|\ge\delta-s\ge\frac\delta2$ for $0\le s\le\delta/2$. Consequently,
	\[
	F(b_i)\ge\int_0^{\delta/2}\frac{\delta^2}{4}\, ds
	=\frac{\delta^3}{8}>0,
	\]
	contradicting $F(b_i)\to0$. This provides the desired contradiction.
	
	Thus $K=\{q_0\}$ and $p_-(t)\to q_0$. Every subsequential limiting chord is then $\sigma_{q_0}$, which shows the desired uniqueness.
\end{proof}

\section{Classification of tangent flows}\label{sec:classification}
Let us now assume $\Tmax<+\infty$. We are going to study the behaviour of the flow at the first singular time $\Tmax$. The first step is to analyse the tangent flows on which the flow is modelled.

We write $\spt\Gamma$ for the closure of the spacetime track, including its limit points at $\Tmax$. Let $X_0=(p_0, \Tmax)\in\spt\Gamma$. 
For $\lambda_i\to+\infty$, define the parabolically rescaled flows $\Gamma^i=\{\Gamma^i_\tau\}_{\tau\in[-\lambda_i^2\Tmax, 0)}$ by
\begin{equation}\label{eq:rescale}
	\Gamma^i_\tau\coloneqq \lambda_i(\Gamma_{\Tmax+\lambda_i^{-2}\tau}-p_0).
\end{equation}
Correspondingly, rescale the domain $\Omega$ so that $\Omega_i=\lambda_i(\Omega-p_0)$ and $S_i=\partial \Omega_i$. If $p_0\in\Omega$, then $\Omega_i\to\Rbb^2$ locally. If $p_0\in S$, fix Euclidean coordinates $(x,y)$ so that
\[
H=\{x\ge0\},\quad P=\partial H=\{x=0\},\quad
L_+=\{y=0,\ x\ge0\},
\]
and $(\overline\Omega_i,S_i)\to(H,P)$ locally smoothly. In these coordinates, $\Sbb^1_+=\Sbb^1\cap H$.

By Edelen's compactness and monotonicity results \cite[Lemma 6.1, Proposition 6.2, Theorem 6.4]{edelen18}, up to a subsequence (not relabelled), $\Gamma^i\to \Gamma^\infty$ in the sense of Brakke flows, where $\Gamma^\infty$ (called a \emph{tangent flow} at $X_0$) is
\begin{itemize}
	\item either a 1-dimensional self-shrinking integral Brakke flow in $\Rbb^2$ (if $p_0\notin S$) 
	\item or a 1-dimensional free boundary integral Brakke flow in the half-space $H$ bounded by $P$ (if $p_0\in S$), such that its reflection $\widetilde{\Gamma}^\infty$ about $P$ is a self-shrinking integral Brakke flow in $\Rbb^2$. 
\end{itemize}

We denote by $\Sigma_\tau$ the spatial support of the time-$\tau$ slice of $\Gamma^\infty$, without multiplicity. If the tangent flow is static for $\tau<0$, we suppress the time index and write $\Sigma_\tau=\Sigma$. Throughout the paper, classification and uniqueness of tangent flows concern their negative-time restrictions.

Edelen's results hold for free boundary integral Brakke flows in any dimension (and codimension). In the present one-dimensional setting, the uniform $L_\mathrm{loc}^2$ bound on the curvature of the curves in $\Gamma^i$ (which is a consequence of Edelen's monotonicity formula \cite[Theorem 5.1]{edelen18}) is rather powerful due to the Sobolev embedding $W^{2,2}(I)\hookrightarrow C^{1, 1/2}(I)$, and allows us to obtain the following classification result (see also \cite[Lemma 6.2]{lz23}). 

\begin{proposition}[Tangent-flow classification]\label{prop:tangent-flow-classification}
	Let $X_0=(p_0,\Tmax)\in\spt\Gamma$. Every tangent flow $\Gamma^\infty$ at $X_0$ has negative-time slices $q|\Sigma_\tau|$ for a fixed positive integer $q$. If $p_0\in\Omega$, then $\Sigma_\tau=\Sigma$ is a line through the origin. If $p_0\in S$, then one of the following holds:
	\begin{enumerate}
		\item[\textup{(B1)}] the shrinking semicircle $\Sigma_\tau=\sqrt{-2\tau}\,\Sbb^1_+$ in $H$, with $q=1$;
		\item[\textup{(B2)}] the static orthogonal half-line $\Sigma=L_+$;
		\item[\textup{(B3)}] the static boundary line $\Sigma=P$.
	\end{enumerate}
	Moreover, if $p_0\in S$, the tangent flow at $X_0$ is unique.
\end{proposition}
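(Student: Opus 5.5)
We want to classify tangent flows by combining Edelen's self-similarity statement with the one-dimensional Sobolev embedding and Abresch--Langer's classification. The key is a good time slice where the rescaled curves converge in $C^{1,\alpha}$.

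\textbf{Step 1: a good time slice.} Edelen's monotonicity formula \cite[Theorem 5.1]{edelen18}, applied after reflecting across $P$ in the boundary case, gives $\int_{-2}^{-1}\int_{\Gamma^i_\tau\cap B_R}\kappa^2\,ds\,d\tau\le C(R)$ uniformly in $i$. By Fatou, for almost every $\tau_*\in(-2,-1)$ we get, along a subsequence, $\int_{\Gamma^i_{\tau_*}\cap B_R}\kappa^2\le C'(R)$ for every $R$. By the uniform local length bound, $\Gamma^i_{\tau_*}\cap B_R$ consists of boundedly many arcs. Reparametrising each arc at constant speed bounds it in $W^{2,2}$, so it converges in $C^{1,\alpha}$ to a $W^{2,2}$ immersed arc.

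The limit varifold is $\Gamma^\infty_{\tau_*}$. By Edelen's results it is self-shrinking, and in the boundary case its reflection is. The pieces are $C^{1,\alpha}$ weak solutions of the shrinker equation $\kappa=\langle x,\nu\rangle/(2\tau_*)$. Bootstrapping gives smoothness, so each piece is a complete smooth immersed self-shrinker, or the reflection of a free boundary piece meeting $P$ orthogonally.

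Embeddedness of $\Gamma^i$ passes to the $C^1$ limit as ``no transverse crossings''. The limit sheets may therefore coincide or touch, but cannot cross transversally.

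\textbf{Step 2: classify the shrinkers.} Abresch--Langer \cite{al86} say the smooth immersed shrinkers are lines through the origin, the circle of radius $\sqrt{-2\tau}$, and the Abresch--Langer curves. The last all have transverse self-intersections, which Step 1 excludes. Any two distinct lines through the origin cross transversally, as do a line and the circle. So the support $\Sigma_{\tau_*}$ is a single line or a single circle, with constant integer multiplicity $q$ by the constancy theorem.

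In the interior case a circle of radius $\sqrt{-2\tau}$ would be a closed component. It would come from closed components of $\Gamma^i$, which are impossible since each $\Gamma^i_\tau$ is a single arc with endpoints on $S_i$. This argument is heuristic, and I would instead use Gaussian density at the point together with the fact that $p_0\in\Omega$ forces the arc to pass through. So $\Sigma$ is a line.

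In the boundary case the reflected shrinker is symmetric about $P$. Its support is therefore either $P$ itself, the line $\{y=0\}$ (which restricts to $L_+$), or the circle (which restricts to $\Sbb^1_+$). Self-similarity then gives the whole negative-time flow, static for lines and shrinking for the circle. The same multiplicity $q$ holds at all negative times by Brakke continuity and the monotonicity of Gaussian density.

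\textbf{Step 3: semicircle multiplicity and uniqueness.} This step is the main obstacle. For (B1), the limiting sheets at good times are closed curves near $\Sbb^1_+$ connecting $P$ to $P$. Since $\Gamma^i_\tau$ is a single connected arc with exactly two endpoints on $S_i$, only one sheet can reach the boundary at both ends near $\Sbb^1_+$. Further sheets would need additional endpoints or would have to close up, and a single embedded arc cannot do this. Hence $q=1$.

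For uniqueness at $p_0\in S$, I would use the Gaussian density $\Theta(X_0)$, which is independent of the sequence. The reflected densities are $\sqrt{2\pi/e}$ for (B1) and $q$ for the lines. By Step 3, (B1) has density $\sqrt{2\pi/e}$, which is not an integer. So (B1) versus a static line is determined by $\Theta$, and with it the multiplicity $q$.

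To distinguish (B2) from (B3) with the same $q$, I would use the endpoint. Under (B2) the rescaled endpoint stays near the origin on $P$ while the curve leaves orthogonally. Alternatively, the set of tangent flows at a point is connected, whereas $L_+$ and $P$ are isolated from each other in the Brakke flow topology. Connectedness follows from continuity in the scale parameter, and isolation holds because the Gaussian-weighted distance between them is bounded below. This connectedness-and-isolation argument is the route I would try first to conclude uniqueness.
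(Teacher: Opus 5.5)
\textbf{Verdict: two genuine gaps.} Your skeleton matches the paper's: good-time $C^{1,\alpha}$ compactness from the $L^2$ curvature bound, no transverse crossings in the limit, Abresch--Langer, and uniqueness at boundary points from connectedness of the compact set of blow-up limits plus discreteness of the candidates. The uniqueness step is fine. One slip there: the reflected density of $q|P|$ is $2q$, not $q$, so density alone does not fix $q$ across types, but connectedness does the work anyway.

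\emph{First gap: excluding the circle at interior points.} You are right to distrust the ``closed components'' argument, since the circle could be traced by part of the arc. The density replacement cannot work, though: $\sqrt{2\pi/e}$ is an admissible Gaussian density, and density says nothing about whether the curve is confined. The missing ingredient is connectedness combined with local Hausdorff convergence of supports. If $\Sigma_\tau=\sqrt{-2\tau}\,\Sbb^1$ and $R>\sqrt{-2\tau}$, then $\Gamma^i_\tau\cap\partial B_R=\varnothing$ for large $i$. The connected arc $\Gamma^i_\tau$ meets $B_R$ (the limit is nonzero), so it lies entirely in $B_R$. This contradicts the fact that its endpoints lie on $S_i$ with $\dist(0,S_i)=\lambda_i\dist(p_0,S)\to\infty$.

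\emph{Second, more serious gap: multiplicity one in (B1).} Your count assumes each sheet near $\Sbb^1_+$ either ends on $S_i$ or closes up, but consecutive sheets can be joined by folds. For example, take two nested semicircular sheets joined by a U-turn near $(0,\sqrt{-2\tau})$. This is an embedded arc with both endpoints on $S_i$, meeting it orthogonally, and it converges as varifolds to $2|\Sigma_\tau|$ if the turn shrinks. Topology does not exclude this; the curvature bound does, through the following argument.
\begin{enumerate}
\item The containment argument above, applied to the semicircle, puts the whole of $\Gamma^i_\tau$ in a fixed ball.
\item At a good time, the constant-speed parametrisations of the \emph{entire} arc on $[0,1]$ are therefore bounded in $W^{2,2}$. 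They converge in $C^{1,\alpha}$ to an immersion of constant speed $q\ell$, where $\ell=\pi\sqrt{-2\tau}$, with image $\Sigma_\tau$.
\item Composing with the inverse of an arclength parametrisation of $\Sigma_\tau$ gives $f:[0,1]\to[0,\ell]$ with $|f'|=q\ell>0$.
\item So $f$ is monotone, and $q\ell=|f(1)-f(0)|\le\ell$, giving $q=1$.
\end{enumerate}
A fold is exactly a sign change of $f'$, which this nondegenerate $C^1$ limit forbids. Your Step 1 already has the tool; Step 3 has to apply it to the whole arc rather than count sheet ends.

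A smaller point in Step 1: the length bound does not bound the number of components of $\Gamma^i_{\tau_*}\cap B_R$, because there may be many tiny arcs near $\partial B_R$. Work instead with the components of $\Gamma^i_{\tau_*}\cap B_{2R}$ that meet $B_R$; each has length at least $R$, so there are boundedly many.
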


\begin{proof}
	As mentioned above, up to a subsequence (not relabelled), $\Gamma^i$ converge as Brakke flows to some self-shrinking free boundary Brakke flow $\Gamma^\infty$. This limit is nonzero by \cite[Theorem~6.4 and Corollary~7.3]{edelen18}.
	
	By the proof of Edelen's monotonicity formula \cite[Theorem 5.1]{edelen18}, there is a set $G\subset(-\infty,0)$ of full measure such that, for every $\tau\in G$ and $R>0$,
	\begin{equation}\label{eq:defect}
		\begin{split}
			\sup_i\int_{\Gamma^i_\tau\cap B_R}(1+\kappa_i^2)\,ds<\infty,\quad \int_{\Gamma^i_\tau\cap B_R}\left|-\kappa_i\nu_i-\frac{p^\perp}{2\tau}\right|^2\,ds\to 0.
		\end{split}
	\end{equation}
	where $\kappa_i$ denotes the curvature of the smooth embedded curve $\Gamma_\tau^i$ and $\nu_i$ its unit normal. By the Sobolev embedding $W^{2,2}(I)\hookrightarrow C^{1, 1/2}(I)$, for every $\tau \in G$ and $\alpha\in(0, 1/2)$, up to a subsequence (depending on $\tau$), the maps $\gamma^i_\tau$ converge in $C^{1, \alpha}_{\mathrm{loc}}$ to a proper and connected smooth immersion $\gamma^\tau_\infty$. Moreover, by passing to the limit in \eqref{eq:defect}, $\gamma^\tau_\infty$ is a weak solution of the shrinker equation
	\begin{equation}\label{eq:shrinker-equation}
		\kappa-\frac{p\cdot \nu}{-2\tau}=0
	\end{equation}
	and, if $p_0\in S$, then $\gamma^\tau_\infty$ satisfies the Neumann condition in \eqref{eq:definition-FBCSF} on the limit boundary $P$, so that its image $\Sigma_\tau$ meets $P$ orthogonally at any boundary points. By elliptic regularity, $\Sigma_\tau$ is a smooth immersed curve and it is locally a $C^{1, \alpha}$ limit of embedded curves, and therefore it cannot have any transverse self-intersections.
	
	By the classification in \cite{al86}, the only smooth immersed planar self-shrinkers with no transverse self-intersections are lines through the origin and the circle $\sqrt{-2\tau}\,\Sbb^1$. Since every embedded free boundary solution in $H$ gives rise to a smooth embedded solution in $\Rbb^2$ after reflection about $P$, we conclude the following.
	\begin{itemize}[leftmargin=20pt]
		\item \emph{The interior case.} If $p_0\notin S$,  
		\begin{itemize}[leftmargin=36pt]
			\item[\textup{(I1)}] if $\Sigma_\tau$ is compact, then $\Sigma_\tau=\sqrt{-2\tau}\,\Sbb^1$;
			\item[\textup{(I2)}] if $\Sigma_\tau$ is non-compact, then $\Sigma_\tau=\Sigma$ is a line through the origin.
		\end{itemize}
		\item \emph{The boundary case.} If $p_0\in S$,
		\begin{itemize}[leftmargin=36pt]
			\item[\textup{(B1)}] if $\Sigma_\tau$ is compact, then $\Sigma_\tau=\sqrt{-2\tau}\,\Sbb^1_+$;
		\end{itemize}
		and, if $\Sigma_\tau$ is non-compact, then $\Sigma_\tau=\Sigma$, where
		\begin{itemize}[leftmargin=36pt]
			\item[\textup{(B2)}] either $\Sigma=L_+$, the orthogonal half-line, 
			\item[\textup{(B3)}] or $\Sigma=P$, the boundary line.
		\end{itemize}
	\end{itemize}

	Let us now rule out \textup{(I1)}. Let $R>\sqrt{-2\tau}$ and notice that if $\Sigma_\tau=\sqrt{-2\tau}\,\Sbb^1$, then local Hausdorff convergence of the spacetime supports gives $\Gamma^i_\tau\cap\partial B_R(0)=\varnothing$ for all large $i$; on the other hand, since $\Gamma^i_\tau$ is connected, this means that $\Gamma^i_\tau\subset B_R(0)$ for all large $i$, contradicting the fact that $\Gamma^i_\tau$ is an arc with endpoints on $S_i$, and $\dist(0,S_i)=\lambda_i\dist(p_0,S)\to\infty$.
	
	Next, let us rule out higher multiplicity in case \textup{(B1)}. Let $\tau \in G$ and let $\ell=\Hcal^1(\Sigma_\tau)=\pi\sqrt{-2\tau}$ be the length of the limiting semicircle. Similarly to the proof of case \textup{(I1)}, for any large $i$ the whole of $\Gamma^i_\tau$ lies in a fixed ball. Therefore, as $\tau$ is a good time, up to a subsequence, the constant-speed parametrisations $\tilde \gamma_i:[0,1]\to \Gamma^i_\tau$ converge in $C^{1, \alpha}$ to an immersion $\tilde \gamma_\infty$ of constant speed $q\ell$ with image $\Sigma_\tau$. Now express the limiting map in the arclength coordinate of the semicircle, namely, take an arclength parametrisation $\alpha:[0,\ell]\to\Sigma_\tau$ and let $f\coloneqq \alpha^{-1}\circ\tilde \gamma_\infty:[0,1]\to[0, \ell]$. Notice that $|f'|=q\ell>0$, so that $f'$ has a fixed sign. Thus,
	\[
	0<q\ell=\int_0^1|f'|\,dx=|f(1)-f(0)|\le\ell,
	\]
	so that $q=1$, as required.
	
	Finally, we prove that boundary tangent flows are unique. The argument is an instance of a general principle: the set of limits of a precompact continuous connected family of rescalings is connected, so that, if it is finite, then it has to be a singleton. For $\lambda\ge\lambda_0$ with $\Tmax-\lambda_0^{-2}>0$, let
	\[
	\zeta_\lambda=\Hcal^1\mres\bigl[\lambda(\Gamma_{\Tmax-\lambda^{-2}}-p_0)\bigr].
	\]
	This is the time-$(-1)$ measure of the original flow $\Gamma$, parabolically rescaled about $X_0$ by the factor $\lambda$. By Edelen's local mass bound \cite[Lemma~6.1]{edelen18}, every set $\{\zeta_\lambda:\lambda\geq a\}$, for $a\ge\lambda_0$, has compact closure in the local weak topology of Radon measures, and \cite[Theorem~4.14 and Proposition~6.2]{edelen18} identify $ \Kcal=\bigcap_{a\ge\lambda_0}\overline{\{\zeta_\lambda:\lambda\geq a\}}$ as the set of all possible time-$(-1)$ tangent measures. Notice that by definition $\Kcal$ is nonempty, compact, and connected.
	
	On the other hand, we have already shown that
	\[
	\Kcal\subset \bigl\{q\Hcal^1\mres P,\; q\Hcal^1\mres L_+,\; \Hcal^1\mres(\sqrt2\,\Sbb^1_+) :q\in\N,\ q\ge1\bigr\},
	\]
	i.e. each measure in $\Kcal$ is induced by one of $P$ and $L_+$ with some positive integer multiplicity $q$, or $\sqrt2\,\Sbb^1_+$ with multiplicity one. The local mass bound \cite[Lemma~6.1]{edelen18} bounds the possible multiplicities, so $\Kcal$ is finite, and its connectedness implies $\Kcal$ is a singleton.  Backward self-similarity determines every negative-time slice from
    the slice at $-1$, proving uniqueness for $\tau<0$. This concludes the proof.
\end{proof}

\begin{remark}[Good times]\label{rmk:good-times}
	We call the full-measure set $G\subset (-\infty, 0)$ selected in the proof of Proposition \ref{prop:tangent-flow-classification} the set of \emph{good times}. At a good time $\tau$,  \eqref{eq:defect} is satisfied and, up to a subsequence (depending on $\tau$), the rescaled curves $\Gamma^i_\tau$ converge to $\Sigma_\tau$ in the local $C^{1,\alpha}$ sense described above. In varifold notation, $|\Gamma^i_\tau|\rightharpoonup q|\Sigma_\tau|$, where $q$ is the multiplicity in Proposition~\ref{prop:tangent-flow-classification}.
\end{remark}

\section{Graphical convergence to a static tangent flow}

Our main sheeting result (Proposition~\ref{prop:sheeting}) is analogous in spirit to White's sheeting theorem \cite[Theorems~8.1--8.2]{whi-size}, used in his structure theory for mean-convex flow \cite{whi-nature}, and to the free boundary sheeting results of Edelen--Haslhofer--Ivaki--Zhu \cite[Theorem~6.10 and Corollaries~6.12--6.14]{FB-mean-convex}: weak convergence to a (possibly higher multiplicity) static tangent flow is upgraded to smooth convergence of separate graphical sheets. Here no sign condition is imposed on the curvature of the evolving arc. Following White's curve-shortening argument \cite[Theorem~7.2 and Lemma~7.4]{White_lectures}, we first obtain graphicality at \emph{good times} by one-dimensional compactness, then extend it between those times using the intersection principle of Lemma~\ref{lem:zero}.

Consider static tangent flows, with spatial support $\Sigma$ as in Section~\ref{sec:classification}. When $\Sigma$ is a line, we write $\graph_i u$ for the Euclidean normal graph of the function $u$ on the line, choosing the normal into $H$ when $\Sigma=P$. When $\Sigma=L_+$, we identify it with $[0,\infty)$ by its $x$-coordinate and use the inward Fermi chart
\begin{equation}\label{eq:fermi-chart}
	\Phi_i(x,y)=\beta_i(y)-x\nu_{S_i}(\beta_i(y)),\qquad x\ge0,
\end{equation}
where $\beta_i$ parametrises $S_i$ by arclength with $\beta_i(0)=0$ and $\beta_i'(0)=(0,1)$, and set $\graph_i u=\{\Phi_i(x,u(x)):x\in\dom u\}$. These charts converge locally smoothly to the identity.

\begin{proposition}[Sheeting near a static tangent flow]\label{prop:sheeting}
	Suppose the rescalings \eqref{eq:rescale} converge to a static tangent flow with negative-time slices $q|\Sigma|$. For every $R>0$ and $I=[a,b]\Subset(-\infty,0)$, $a<b$, and all large $i$, there are smooth ordered functions $u_i^1<\cdots<u_i^q$ on $(\Sigma\cap B_{2R}(0))\times I$ such that
	\[
	\Gamma^i_\tau\cap B_R(0)
	=B_R(0)\cap\bigcup_{j=1}^q\graph_i u_i^j(\cdot,\tau)
	\qquad(\tau\in I),
	\]
	and $u_i^j\to0$ in $C^\infty((\Sigma\cap\overline{B}_R(0))\times I)$, including $\partial\Sigma$ when present.
	
	If $\Sigma$ is a line, $B_R(0)$ contains no endpoints; if $\Sigma=P$, moreover, $u_i^j>0$. If $\Sigma=L_+$, each graph starts at a distinct endpoint of $\Gamma^i_\tau$ and satisfies $\partial_xu_i^j(0,\tau)=0$; in particular, $q\le2$.
\end{proposition}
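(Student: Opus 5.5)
We will follow the scheme of White's curve-shortening notes \cite[Theorem~7.2, Lemma~7.4]{White_lectures}: graphicality is first established at good times, then propagated to all intermediate times with the intersection principle, and finally upgraded to smooth convergence by parabolic estimates.

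\emph{Step 1: good times.} Fix $R$ and $I=[a,b]$, and choose good times $a'<a$ and $b'>b$ in $G\cap(-\infty,0)$ (Remark~\ref{rmk:good-times}), together with a dense countable subset of good times in $[a',b']$. At each such $\tau$, \eqref{eq:defect} gives uniform $W^{2,2}$ bounds on the constant-speed parametrisations of the arcs of $\Gamma^i_\tau\cap B_{4R}$. After a subsequence, each arc converges in $C^{1,\alpha}$ to a constant-speed immersion with image in $\Sigma$.

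Since $\Sigma$ is static and straight, the limit of each arc is a monotone parametrisation of a segment, by the same $|f'|>0$ argument used in the proof of Proposition~\ref{prop:tangent-flow-classification}. Hence each arc meeting $B_{3R}$ is, for large $i$, a $C^1$-small graph over $\Sigma$. The multiplicity $q$ then counts the arcs, and they are ordered by embeddedness.

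The case $\Sigma=L_+$ needs extra care. An arc could only turn back inside $B_{3R}$ if its limit parametrisation reversed direction, which is impossible since $|f'|>0$. Each sheet therefore has an endpoint on $S_i$ near $0$, which gives $q\le2$, and the Neumann condition passes to $\partial_x u=0$. When $\Sigma=P$, the sheets lie in $\oOm_i$. When $\Sigma$ is a line and $p_0\in\Omega$, no endpoints lie in $B_R$. To make these statements uniform in $i$ rather than along subsequences, we argue by contradiction: a failure along a sequence would produce a subsequence contradicting the above.

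\emph{Step 2: between good times.} We count intersections with short transverse segments $\ell_x=\{x\}\times[-\delta,\delta]$, taken in Fermi coordinates for $L_+$ and perpendicular to $\Sigma$ for lines, with $|x|<3R$. These segments are static solutions, and their endpoints stay disjoint from $\Gamma^i$ near the relevant times because of Hausdorff convergence of supports. At two consecutive good times $\tau_1<\tau_2$, Step~1 gives exactly $q$ transverse interior intersections with each $\ell_x$.

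By Lemma~\ref{lem:zero}, every intersection on $[\tau_1,\tau_2]$ is then transverse and the count stays equal to $q$. Transversality to every $\ell_x$ means the slope is finite, so graphicality over $\Sigma\cap B_{2R}$ holds throughout $[a,b]$. The subtle point is that Lemma~\ref{lem:zero} requires endpoints of $\ell_x$ either disjoint from $\Gamma^i$ or on $S$. For $L_+$ near $x=0$ we instead use segments of the boundary-adapted chart, whose lower ends lie on $S_i$ but meet it non-orthogonally. To avoid this, we either restrict to $x\ge\epsilon$ and handle the boundary layer using the Neumann condition and reflection, or use an arc of $S_i$ as the comparison curve. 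I expect this boundary bookkeeping, together with the uniformity in $i$ of the small slope, to be the main obstacle.

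\emph{Step 3: smooth convergence.} Graphicality gives uniform $C^0$ smallness of $u_i^j$, because the support converges to $\Sigma$. The finite intersection count with tilted segments bounds the slope by any prescribed $\eta$. Each $u_i^j$ solves the graphical curve shortening equation $u_\tau=u_{xx}/(1+u_x^2)$, perturbed by chart terms that vanish smoothly. For $L_+$, $\partial_x u=0$ at $x=0$ allows even reflection. Interior and boundary parabolic estimates \cite{Lieberman} for quasilinear equations with small gradient (Schauder after a Hölder gradient estimate) then give $u_i^j\to0$ in $C^\infty$ on $\overline B_R\times I$. Strict ordering follows from embeddedness, and positivity for $\Sigma=P$ follows from $\Gamma^i_\tau\subset\oOm_i$ together with the strong maximum principle.
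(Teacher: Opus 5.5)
Your overall architecture (graphicality at good times, propagation via Lemma~\ref{lem:zero}, then parabolic regularity) is the paper's. For an interior line, your Step~2 together with the tilted-segment slope bound is essentially White's argument as reproduced there. The gap is in the two boundary cases.

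\textbf{The case $\Sigma=P$.} You use segments perpendicular to $P$ exactly as for an interior line. But these segments cross $S_i$, and the endpoints of $\Gamma^i_\tau$ slide along $S_i$. When an endpoint passes through such a segment, the hypothesis $\partial\Gamma^1_t\cap(\Gamma^2_t)^\circ=\emptyset$ of Lemma~\ref{lem:zero} fails. At that moment the intersection number can change by one, in either direction, with no tangency. So the statement that no endpoint enters $B_{2R}(0)$ at non-good times is assumed rather than proved, yet it is exactly what must hold before the interior argument can run. Positivity of $u^j_i$ depends on it too, since $\overline{\Omega}_i$ touches $P$ at the origin, so a strong maximum principle is not the relevant tool. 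The paper closes this by counting intersections with the straight segments $\ell_{i,c}$ issuing from $\beta_i(c)\in S_i$ in the direction $-\nu_{S_i}(\beta_i(c))$. These are static \emph{free boundary} solutions, so Lemma~\ref{lem:zero} does apply to them, and the count is $q$ at both $a'$ and $b'$. An endpoint of $\Gamma^i_\tau$ reaching $\beta_i(c)$ would be a common endpoint with tangency, forcing a strict drop. Once endpoints are excluded, $u^j_i>0$ follows from properness and $\Omega_i\subset\{x>0\}$.

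\textbf{The case $\Sigma=L_+$.} You correctly locate the difficulty but leave it open, and neither proposed remedy works as stated. An arc of $S_i$, like a Fermi level set $\{x=\mathrm{const}\}$ (a parallel curve of $S_i$), is curved and static, hence not a curve shortening flow, so Lemma~\ref{lem:zero} cannot be applied to it. Reflecting across a curved $S_i$ does not produce a solution either. The missing idea is to stop counting intersections near $S_i$ altogether:
\begin{enumerate}
\item Use the interior argument only near the outer side $x=2R$ of the collar, where it gives $q$ transverse, nearly horizontal crossings.
\item From Step~1 at $a'$, each of the $q$ subarcs joins a boundary endpoint to the outer side. This decomposition persists because the crossings stay transverse.
\item On each subarc, apply the maximum principle to $f=\langle T,e_x\rangle$. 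It satisfies $f_\tau=f_{ss}+\eta f_s+\kappa^2 f$, so positive constants are subsolutions.
\end{enumerate}
Orthogonality gives $f\approx1$ at the endpoint on $S_i$, the interior argument gives $f\approx1$ at the outer crossing, and Step~1 gives it at $a'$. Hence $f\ge c_i\to1$ throughout $[a',b']$, which yields graphicality down to $x=0$ with slopes tending to zero.

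Finally, drop the dense countable family of good times and the notion of ``consecutive'' good times. Uniformity in $i$ is available only for finitely many times, and the two times $a'<a$, $b'>b$ already suffice.
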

\begin{proof}
	Fix $R>0$ and $I=[a,b]\Subset(-\infty,0)$ and choose good times $a'<a<b<b'<0$. Let $I'=[a',b']$. Notice that, by local Hausdorff convergence of the spacetime supports \cite[Theorem~4.14 and Corollary~7.3]{edelen18}, for every compact $K\subset\Rbb^2$, we have
	\begin{equation}\label{eq:static-support-control}
		\sup_{\tau\in I'}\sup_{p\in\Gamma^i_\tau\cap K}
		\dist(p,\Sigma)\rightarrow0.
	\end{equation}
		
\textit{Step 1: Sheeting at the good times $a', b'$}.
Let $n$ be a unit normal to the line/half-line $\Sigma$ and let $\tau$ be either one of $a',b'$. Notice that, since $\tau$ is a good time, by $C^{1, \alpha}$-convergence (cf.~ Remark \ref{rmk:good-times}), for every compact $K\subset\Rbb^2$ we have
	\begin{equation}\label{eq:good-time-uniform-tilt}
		\sup_{p\in\Gamma^i_\tau\cap K}|\langle T_i(p),n\rangle|\to 0,
	\end{equation}
where $T_i$ is the unit tangent to $\Gamma^i_\tau$. Let $\rho, h>0$ be fixed.
\begin{itemize}[leftmargin=*]
	\item \textit{The interior case.} Use Euclidean coordinates $(x,y)$ with $\Sigma=\{x=0\}$ and let $Q_{h, \rho}\coloneqq (-h, h)\times(-\rho, \rho)\subset \mathbb{R}^2$. First notice that for all sufficiently large $i$ there are no endpoints in $Q_{h, \rho}$. Indeed, as $p_0\in\Omega$ and the endpoints of $\Gamma^i_\tau$ lie in the boundary $S_i$, for all sufficiently large $i$, they lie outside $Q_{h, \rho}$. By \eqref{eq:static-support-control}, the sides $x=\pm h$ avoid the flow for all large $i$. 
	By \eqref{eq:good-time-uniform-tilt}, on each of the $N_i$ connected components of $\Gamma^i_\tau\cap Q_{h, \rho}$, $|\partial_s y|\geq 1/2$ and therefore the derivative has a fixed sign for all large $i$. Hence, every connected component of $\Gamma^i_\tau\cap Q_{h, \rho}$ joins the two sides $y=\pm \rho$ and is a graph. The components are disjoint since $\Gamma^i_\tau$ is an embedded curve.
	\item \textit{$\Sigma = P$.} The argument is nearly identical to the one in the previous case. The only difference is in how we establish that there are no endpoints of $\Gamma^i_\tau$ in $Q_{h, \rho} = (-h, h)\times(-\rho, \rho)$. In the case $\Sigma=P$, this follows from \eqref{eq:good-time-uniform-tilt}: indeed, if $\Gamma^i_\tau$ had an endpoint in $Q_{h, \rho}$, the orthogonality condition in \eqref{eq:definition-FBCSF} would contradict \eqref{eq:good-time-uniform-tilt}.
	\item \textit{$\Sigma=L_+$.} In $\Phi_i([0,\rho)\times(-h,h))$, support convergence and $\Phi_i\to\mathrm{Id}$ show that the faces $y=\pm h$ avoid the flow for all large $i$. By the same argument as above, the inward distance coordinate satisfies $|\partial_s x|\ge1/2$ for all large $i$. Thus each of the $N_i$ connected components of $\Gamma^i_\tau \cap \Phi_i([0,\rho)\times(-h,h))$ joins $x=0$ to $x=\rho$: the positive lower bound on $|\partial_s x|$ excludes two ends on the same face and properness prevents the curve from touching $x=0$ outside of an endpoint. Thus the components are graphs over $\{0\leq x<\rho\}$.
\end{itemize}
In all cases, $C^0$-convergence of the functions $u_i^1<\dots<u_i^{N_i}$ representing the graphical sheets to $0$ follows from \eqref{eq:static-support-control}; then \eqref{eq:good-time-uniform-tilt} implies $C^1$-convergence. In the case $\Sigma=L_+$, the orthogonality condition at the endpoints implies the homogeneous Neumann condition $\partial_xu_i^j(0,\tau)=0$ for all $j$.
Finally, each graph contributes a uniform positive amount of mass on a smaller fixed rectangle, so the local mass bound bounds $N_i$. Since each graph converges in $C^1$ to the same segment, varifold convergence to $q|\Sigma|$ then forces $N_i=q$ for all sufficiently large $i$. In the case $\Sigma=L_+$, in particular $q\leq 2$, because the graphs start at distinct endpoints and each $\Gamma^i_\tau$ has exactly two endpoints.
	
	\vspace{1ex}
	
	\emph{Step 2: Extending sheeting to the closed time interval $[a', b']$.} As in 	\emph{Step 1}, we shall divide the argument into three cases.
	\begin{itemize}[leftmargin=*]
		\item \textit{The interior case.} The argument for the interior case follows \cite[Theorem~7.2 and Lemma~7.4]{White_lectures}. We record it here for completeness.
		
		As in \emph{Step 1}, use Euclidean coordinates $(x,y)$ with $\Sigma=\{x=0\}$, and fix the rectangles
		\[
		Q=(-1,1)\times(-2R,2R),\quad Q'=(-2,2)\times(-2R-1,2R+1).
		\] 
		Again, as in \emph{Step 1}, for all sufficiently large $i$ (depending on $R$ only) $Q'$ cannot contain any endpoints of $\Gamma^i_\tau$. Let us now apply Lemma \ref{lem:zero} to line segments transverse to $\Sigma$. At times $a',b'$, \emph{Step 1} gives exactly $q$ Euclidean graphs in $Q'$ with small $C^1$-norm. Thus, for each $c\in (-2R-1, 2R+1)$, the segment $\sigma_c=[-2,2]\times\{c\}$ has exactly $q$ transverse intersections with both $\Gamma^i_{a'}$ and $\Gamma^i_{b'}$, and its endpoints avoid the flow for all times $\tau\in I'$ by \eqref{eq:static-support-control}. Hence, by Lemma \ref{lem:zero}, for all $c\in (-2R-1, 2R+1)$
		\[
		\tau \mapsto  \#(\Gamma^i_{\tau}\cap \sigma_c) \  \text{is constant on } I'=[a', b'],
		\]
		and  $\sigma_c$ has exactly $q$ transverse intersections with $\Gamma^i_{\tau}$ throughout $I'$. Ordering these intersections gives $q$ smooth graphs $u_i^1(\cdot, \tau)<\dots<u_i^{q}(\cdot, \tau)$ in $Q'$ throughout $I'$.
		
		Their slopes also tend uniformly to zero in $Q$. To see this, fix $\epsilon\in(0,\pi/2)$ and choose $h>0$ so small that any line through a point of $Q\cap\{|x|\le h\}$ and making an angle of at least $\epsilon$ with $\Sigma$ meets both lines $x=\pm h$ inside $Q'$.
		For large $i$, by \eqref{eq:static-support-control}, $\Gamma^i_\tau\cap Q'\subset\{|x|<h/2\}$ for every $\tau\in I'$. Moreover, by \emph{Step~1}, at both good times the graph slopes have absolute value less than $\tan\epsilon$.
		Suppose that at some intermediate time a tangent line at a point of $\Gamma^i_\tau\cap Q$ makes an angle of at least $\epsilon$ with $\Sigma$. Consider the fixed segment of this line between $x=-h$ and $x=h$. At each good time, it meets every sheet exactly once and transversely. Thus its total intersection number is $q$ at both $a'$ and $b'$. Since $Q'$ contains no curve endpoints, Lemma~\ref{lem:zero} excludes any tangential intersection, giving the required contradiction. Thus the tangent lines in $Q$ converge uniformly to the direction of $\Sigma$ throughout $I'$.
		
		\item \textit{$\Sigma =P$.} Again, use Euclidean coordinates $(x,y)$ with $\Sigma=\{x=0\}$, and fix the rectangles
		\[
		Q=(-1,1)\times(-2R,2R),\quad Q'=(-2,2)\times(-2R-1,2R+1).
		\] 
		We shall prove that for all sufficiently large $i$ the larger rectangle $Q'$ does not contain any endpoints of $\Gamma^i_\tau$ for any $\tau \in I'$. Then the interior argument of the previous paragraph applies verbatim.
		
		In order to rule out $\Gamma^i_\tau$ having any endpoints in $Q'$, we again use Lemma \ref{lem:zero} with segments normal to $S_i$ foliating a small collar neighbourhood of $S_i$ around the origin. Parametrise $S_i$ by $\beta_i$ as in \eqref{eq:fermi-chart} and for $|c|\le2R+2$ consider the segments $\ell_{i,c}=\{\beta_i(c)-s\nu_{S_i}(\beta_i(c)):0\le s\le1\}$.
		As $S_i\to P$ smoothly, for large $i$, they lie in a uniform collar neighbourhood of $S_i$ and meet $S_i$ only at their endpoints at $s=0$. By \eqref{eq:static-support-control}, their endpoints at $s=1$ remain disjoint from the flow. At both good times,
		\begin{equation}\label{eq:preservation-intersection}
			\#(\Gamma^i_{a'}\cap\ell_{i,c})
			=\#(\Gamma^i_{b'}\cap\ell_{i,c})=q,
		\end{equation}
		with all intersections transverse and interior, since the graphs and $S_i$ are $C^1$-close to $P$.
	
		If, for some $\tau$ in $(a', b')$, the curve $\Gamma^i_\tau$ had an endpoint at $\beta_i(c)$, then $\Gamma^i_\tau$ would be tangent to $\ell_{i,c}$ at $\beta_i(c)$ and Lemma \ref{lem:zero} would force a strict decrease in the total intersection number between $a'$ and $b'$, contradicting \eqref{eq:preservation-intersection}.

        Since $P=\{x=0\}$ is a supporting line of the convex domain $\Omega_i$, with $\Omega_i\subset\{x>0\}$, properness and the absence of endpoints in $Q'$ imply that every sheet lies strictly in $\{x>0\}$; hence $u_i^j>0$ throughout $I'$.
		
		\item \textit{$\Sigma = L_+$.} Finally, in the half-line case, we first apply the interior argument away from $S_i$ and then use the maximum principle to propagate graphicality all the way to the endpoints.
		
		Consider the fixed collar $\Phi_i([0,2R]\times(-1,1))$. Applying the interior argument in a neighbourhood of the outer side $\Phi_i(\{2R\}\times(-1,1))$ gives exactly $q$ transverse crossings, with tangent lines tending uniformly to the horizontal. At $a'$, \emph{Step~1} identifies the curve in the collar as $q$ subarcs, each joining an endpoint to the outer side. Since the crossings remain transverse, their order along the curve cannot change; this preserves the decomposition throughout $I'$.
		
		Orient each subarc from its boundary endpoint towards the outer side, and set $f^i=\langle T_i,e_x\rangle$, where $e_x=(1,0)$. At the intersection with the outer side, by the above, $f^i\to 1$. The same holds at the boundary endpoint, because of orthogonality. Together with \eqref{eq:good-time-uniform-tilt} at $a'$, these estimates give constants $c_i>0$, with $c_i\to1$, such that $f^i\ge c_i$ initially and at both ends throughout $I'$. Writing $s$ for arclength and $\eta$ for the tangential parametrisation speed, we have
		\[
		f^i_\tau=f^i_{ss}+\eta f^i_s+\kappa_i^2f^i.
		\]
		Positive constants are subsolutions, so the maximum principle gives $f^i\ge c_i$ along every subarc throughout $I'$. Since the gradient of the inward distance coordinate converges uniformly to $e_x$ in the collar, that coordinate is strictly increasing along each subarc for all large $i$.
		
		Thus the subarcs are the graphs $\graph_i u_i^j$ of smooth ordered functions $u_i^1<\cdots<u_i^q$ on $[0,2R)\times I'$. Their heights tend uniformly to zero by \eqref{eq:static-support-control}, and $\partial_xu_i^j=(\partial_s y)/(\partial_s x)\to0$ since $f^i\geq c_i\to 1$ and $\Phi_i$ converges smoothly to the identity. At the endpoints, orthogonality gives $\partial_xu_i^j(0,\tau)=0$.
	\end{itemize}

	\emph{Smooth convergence.}
	Finally, we upgrade $C^1$ convergence to smooth convergence by standard parabolic regularity. If $\Sigma$ is a line, then the functions $u_i^j$ solve the equation $u_\tau=u_{yy}/(1+u_y^2)$. \\
	If $\Sigma = L_+$, then in  Fermi coordinates \eqref{eq:fermi-chart}, we have
	$\Phi_i^*\delta=dx^2+a_i(y,x)^2dy^2$, where $a_i(y,x)\rightarrow1$ locally smoothly.
	Therefore, the functions $u_i^j$ solve a parabolic equation with principal coefficient $(1+a_i^2u_x^2)^{-1}$, and homogeneous Neumann data at $x=0$. The $C^1$ bounds  give uniform parabolicity, so interior and boundary estimates \cite{Lieberman} give smooth convergence to zero as required. 
\end{proof}

\begin{remark}[Convexity]\label{rmk:convexity}
    Convexity of the domain $\Omega$ is used crucially in the case $\Sigma= P$ to obtain that $u_i^j(\cdot, \tau)>0$ for all $\tau \in I$. Indeed, in our chosen coordinate system, by convexity, $\Omega_i\subset \{x>0\}$. If $\Sigma = P$ and $Q\subset \mathbb{R}^2$ is a compact rectangle containing the origin in its interior, the proof of Proposition \ref{prop:sheeting} shows that for all sufficiently large $i$, the rescaled curves $\Gamma^i_\tau$ have no endpoints in $Q$ for all $\tau \in I$. Therefore, by properness, $\Gamma^i_\tau\cap Q\subset \Omega_i\subset \{x>0\}$ for all sufficiently large $i$ and all $\tau \in I$.
\end{remark}

\section{Multiplicity one and exclusion of the boundary line}\label{sec:multiplicity}

In this section, we finally rule out the remaining tangent flows, namely a higher multiplicity static line at an interior point, the (quasi)-static line $P$ at a boundary point, and the static half-line $L_+$ with higher multiplicity at a boundary point. If any of these were to arise,  as in \cite[Section~7]{White_lectures}, we apply Proposition \ref{prop:sheeting} and choose scales for which the normalised graphical separation contradicts the following lemma.

A very similar technique was used in \cite[Theorems~9.1--9.2]{whi-size} to rule out higher multiplicity static planes, and adapted to the free boundary setting in \cite[Theorem~6.21]{FB-mean-convex}.

\begin{lemma}\label{lem:collapse}
	Let $I= \mathbb{R}$ or $I=[0, +\infty)$. There is no nonnegative smooth solution of $w_\tau=w_{xx}$ on $I\times(-2,0)$ satisfying
	\begin{equation}\label{eq:collapse-conditions}
		w(0,-1)=1,\qquad w(0,\tau)\le\sqrt{-\tau}
		\quad(-1\le\tau<0).
	\end{equation}
	and a homogeneous Neumann boundary condition at 0 when $I=[0, +\infty)$.
\end{lemma}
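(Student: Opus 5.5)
The plan is to contradict \eqref{eq:collapse-conditions} by showing that the value $w(0,-1)=1$ forces a \emph{uniform} positive lower bound on $w(0,\tau)$ for all $\tau\in[-3/4,0)$. This is incompatible with $w(0,\tau)\le\sqrt{-\tau}\to0$ as $\tau\uparrow0$.

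\emph{Step 0: reduction to $I=\Rbb$.} When $I=[0,+\infty)$, I would extend $w$ evenly, setting $\tilde w(x,\tau)=w(-x,\tau)$ for $x<0$. The homogeneous Neumann condition makes $\tilde w$ a $C^1$ (in $x$) weak solution of the heat equation across $x=0$. Standard regularity then makes it a smooth nonnegative solution on $\Rbb\times(-2,0)$ with the same values on $\{x=0\}$. So it suffices to treat $I=\Rbb$.

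\emph{Step 1: spreading positivity by Harnack.} I would apply the parabolic Harnack inequality for nonnegative solutions of the heat equation on a fixed cylinder, say $(-3,3)\times(-7/4,0)$, which lies inside the domain of definition. It compares the value at the earlier point $(0,-1)$ with the values on the later compact set $[-1,1]\times\{-3/4\}$, which is separated from $(0,-1)$ by a positive time lag. This gives a universal constant $c_0>0$ with
\[
w(x,-3/4)\ \ge\ c_0\,w(0,-1)=c_0\qquad\text{for }|x|\le1 .
\]
This is the step that uses nonnegativity essentially.

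\emph{Step 2: comparison with a heat-kernel subsolution.} Let $v$ be the solution of the heat equation on $\Rbb\times[-3/4,0)$ with initial data $c_0\uno_{[-1,1]}$ at $\tau=-3/4$. Explicitly,
\[
v(0,\tau)=c_0\int_{-1}^{1}\frac{e^{-x^2/(4(\tau+3/4))}}{\sqrt{4\pi(\tau+3/4)}}\,dx .
\]
This quantity is continuous and positive for $\tau\in(-3/4,0]$ and tends to $c_0$ as $\tau\downarrow-3/4$, so it is bounded below by some $c_1>0$ on $[-3/4,0)$. To get $w\ge v$ without any growth assumption on $w$, I would compare on bounded intervals. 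On $[-M,M]\times[-3/4,0)$, let $v_M$ solve the heat equation with zero Dirichlet data and the same initial data. Since $w\ge0=v_M$ on the lateral boundary and $w\ge v_M$ initially, the maximum principle gives $w\ge v_M$. Then $v_M\uparrow v$ as $M\to\infty$. Hence $w(0,\tau)\ge c_1$ for all $\tau\in[-3/4,0)$.

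\emph{Step 3: conclusion.} Choosing $\tau\in(-c_1^2,0)$ gives $\sqrt{-\tau}<c_1\le w(0,\tau)$, which contradicts \eqref{eq:collapse-conditions}.

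The main obstacle is Step 1: pointwise information at the single point $(0,-1)$ must be turned into a lower bound on a whole interval at a later time. The Harnack inequality does exactly this, but one has to arrange the earlier and later cylinders inside $\Rbb\times(-2,0)$ with a strict time gap. Step 2 is routine once the comparison is done on truncated domains, which avoids imposing growth conditions on $w$.
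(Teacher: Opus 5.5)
Your proof is correct, but it takes a longer route than the paper's.

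\textbf{The paper's route.} It never uses the Harnack inequality. Since $w$ is continuous and $w(0,-1)=1$, there is $r_0>0$ with $w(\cdot,-1)\ge 1/2$ on $I\cap[-r_0,r_0]$ at the \emph{same} time $\tau=-1$. The paper then compares directly with the explicit separable solution $\varphi=\frac12 e^{-\pi^2(\tau+1)/(4r_0^2)}\cos(\pi x/(2r_0))$ on $(I\cap[-r_0,r_0])\times[-1,b]$. This barrier vanishes at $x=\pm r_0$, where $w\ge0$, and lies below $w$ at $\tau=-1$. It also satisfies $\varphi_x(0,\tau)=0$, so the Neumann case needs no reflection. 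The maximum principle then gives $w(0,\tau)\ge\frac12 e^{-\pi^2/(4r_0^2)}$ on $[-1,0)$.

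\textbf{Comparison.} The step you call the main obstacle is handled by continuity alone, so your Steps 0 and 1 (even reflection plus Harnack) can be dropped. What your route buys is a lower bound $c_1$ independent of $w$. The paper's bound depends on $r_0$, that is, on the modulus of continuity of $w$ at time $-1$. Uniformity of this kind would matter only for a family of solutions, for instance the approximating $w_i$ in Proposition~\ref{prop:static-exclusion} rather than their limit. It is not needed for the lemma as stated.

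\textbf{A small technical point in Step 2.} Comparing $w$ with Dirichlet solutions whose initial datum $c_0\uno_{[-1,1]}$ is discontinuous needs a word of justification. It is simpler to compare with a continuous minorant, for example the cosine barrier above with $r_0=1$ started at $\tau=-3/4$ with amplitude $c_0$. This also removes the need for the limit $M\to\infty$.
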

\begin{proof}
	Choose $r_0>0$ so that $w(x,-1)\ge1/2$ for $x\in I\cap[-r_0, r_0]$ and let
	\[
	\varphi(x,\tau)=\frac12\exp\!\left(-\frac{\pi^2(\tau+1)}{4r_0^2}\right)\cos\!\left(\frac{\pi x}{2r_0}\right), 
	\]
	which solves the heat equation, i.e. $\varphi_\tau=\varphi_{xx}$ on $\mathbb{R}$, and satisfies $\varphi_x(0, \tau)=0$.
	For every $b\in(-1,0)$, comparison on $(I\cap[-r_0,r_0])\times[-1,b]$ gives $w\geq\varphi$.
	Since $b<0$ is arbitrary,
	\[
	w(0,\tau)
	\ge\frac12\exp\!\left(-\frac{\pi^2}{4r_0^2}\right)>0
	\qquad(-1\le\tau<0),
	\]
	contradicting $w(0,\tau)\le\sqrt{-\tau}$ as $\tau\uparrow0$.
\end{proof}

\begin{proposition}\label{prop:static-exclusion}
Let $X_0=(p_0,\Tmax)\in\spt\Gamma$. If $p_0\in\Omega$, every tangent flow at $X_0$ has multiplicity one. If $p_0\in S$, a tangent flow of type \textup{(B2)} has multiplicity one, and type \textup{(B3)} does not occur.
\end{proposition}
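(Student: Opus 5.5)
We argue by contradiction in each of the three cases, following the scheme of \cite[Section~7]{White_lectures} and \cite[Theorem~9.2]{whi-size}: assume the tangent flow at $X_0$ is static with slices $q|\Sigma|$, where either $q\ge2$ (for $\Sigma$ a line through the origin or $\Sigma=L_+$), or $\Sigma=P$ with any $q\ge1$. For $\Sigma=P$ we add an extra ``sheet'' $u^0\equiv$ the graph of $S$ itself. Indeed, by Remark~\ref{rmk:convexity} the rescaled boundary $S_i$ lies on one side of all the sheets, and $S_i$ is a static barrier (its tangent line at $p_0$ is a static solution), so $u_i^1-s_i>0$, where $s_i$ is the graph of $S_i$ over $P$. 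In all cases we thus have two ordered graphical functions $v_i<u_i$ over $\Sigma$ (consecutive sheets from Proposition~\ref{prop:sheeting}, or $s_i<u_i^1$). Their difference $w_i=u_i-v_i>0$ satisfies a linear parabolic equation $\partial_\tau w_i=A_i\partial_x^2 w_i+B_i\partial_x w_i$, with coefficients $A_i\to1$, $B_i\to0$ smoothly on compact sets (using the smooth convergence in Proposition~\ref{prop:sheeting}). When $\Sigma=L_+$ it also satisfies the homogeneous Neumann condition at $x=0$, up to errors from the Fermi chart that vanish as $i\to\infty$. For $\Sigma=P$, $s_i$ is not a solution, but it is a static supersolution by convexity; hence $w_i$ is a supersolution, which is the correct direction for the lower bound we need.

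Next comes the choice of scales. We work on the original unrescaled flow and define the separation $d(t)$ at the point over $p_0$, i.e.\ the height difference of the two chosen sheets at $x=0$ at time $t$. Sheeting at every scale $\lambda\to\infty$ (Proposition~\ref{prop:sheeting} applies to any sequence, by uniqueness of the static tangent flow up to subsequences, or simply along subsequences) gives $d(t)=o(\sqrt{\Tmax-t})$ as $t\uparrow\Tmax$. Moreover $d(t)>0$ for $t<\Tmax$, because the sheets are distinct embedded pieces, or because of properness when $\Sigma=P$. Following White, we choose times $t_i\uparrow\Tmax$ maximising the ratio $d(t)/\sqrt{\Tmax-t}$ over $[t_i,\Tmax)$. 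Concretely, let $t_i$ be the time in $[\Tmax-\lambda_i^{-2},\Tmax)$ where $d(t)/\sqrt{\Tmax-t}$ attains its supremum; this supremum tends to $0$ but is positive, and is attained since the ratio tends to $0$ at $\Tmax$. We then rescale parabolically about $(p_0,\Tmax)$ by $\mu_i=(\Tmax-t_i)^{-1/2}$, so that time $t_i$ becomes $\tau=-1$. We also rescale heights by the factor $\epsilon_i=\mu_i d(t_i)\to0$, setting $\tilde w_i=w_i/\epsilon_i$. Then $\tilde w_i(0,-1)=1$ and $\tilde w_i(0,\tau)\le\sqrt{-\tau}$ for $-1\le\tau<0$.

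A Harnack inequality for the linear equation, together with the Neumann condition at the boundary in the half-line case, gives local uniform bounds for $\tilde w_i$ on compact subsets of $I\times(-2,0)$. By parabolic estimates, a subsequence converges to a nonnegative solution $w$ of the heat equation, with a Neumann condition in the case $L_+$, or a nonnegative supersolution in the $P$ case (the comparison argument still works there). This $w$ satisfies \eqref{eq:collapse-conditions}, contradicting Lemma~\ref{lem:collapse}. For the supersolution case we note that the comparison with $\varphi$ in the proof of Lemma~\ref{lem:collapse} only needs $w_\tau\ge w_{xx}$.

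The main obstacle is the Harnack/compactness step. The bound $\tilde w_i\le\sqrt{-\tau}$ only holds at $x=0$, so we must control $\tilde w_i$ away from $x=0$, and at times before $-1$ (the maximising choice only covers $\tau\ge-1$). We would try first to apply the parabolic Harnack inequality forward in time: values at $(x,\tau)$ with $\tau<-1/2$ are bounded by $C\,\tilde w_i(0,\tau')$ for some $\tau'\in[-1,0)$, and these are at most $1$. For $\tau\in(-2,-1)$ we enlarge the maximisation window, taking the supremum over $[\Tmax-4\lambda_i^{-2},\Tmax)$ instead, so that the needed upper bound holds on $(-2,0)$. The Neumann boundary is handled by even reflection.
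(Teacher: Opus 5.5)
Your overall architecture is the paper's: sheeting, a separation $d(t)=o(\sqrt{\Tmax-t})$, White's choice of maximising times, height normalisation, Harnack compactness, and Lemma~\ref{lem:collapse}. Two steps, however, do not work as written.

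\emph{The case $\Sigma=P$.} Taking the graph $s_i$ of $S_i$ as the lower sheet gives
$\partial_\tau w_i=A_i\partial_y^2w_i+B_i\partial_yw_i+f_i$, where $f_i=s_i''/(1+(s_i')^2)\ge0$. (So $s_i$ is a static \emph{sub}solution, which is why $w_i$ is a supersolution.) After dividing by $\epsilon_i$, the forcing has size about $\kappa_S(p_0)/(\mu_i\epsilon_i)=\kappa_S(p_0)(\Tmax-t_i)/d(t_i)$. You have no a priori control over this quantity. A nonnegative supersolution only satisfies the weak Harnack inequality, which gives no pointwise upper bounds, and parabolic estimates need bounded forcing. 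Hence your local bounds and the convergence of $\tilde w_i$ to a supersolution are unjustified.

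The paper avoids this by comparing with the tangent line $P$ itself, i.e.\ $u\equiv0$, which is an exact static solution. Convexity gives $\Omega_i\subset\{x>0\}$, so $h_i=u_i^1>0$ solves a homogeneous linear equation with $A_i\to1$. Since $s_i(0)=0$, you have $h_i(0,\tau)=w_i(0,\tau)$, so your normalisation is unchanged. Alternatively, keep your $w_i$, use $0<\tilde w_i\le u_i^1/\epsilon_i$, and run Harnack on $u_i^1/\epsilon_i$.

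\emph{The interior case.} You appeal to uniqueness of the tangent flow, but Proposition~\ref{prop:tangent-flow-classification} proves uniqueness only at boundary points. You need Edelen's result that all tangent flows at $X_0$ have the same Gaussian density, hence the same multiplicity $q$. This is what guarantees $q$ sheets at \emph{every} late time, not just along one sequence. Moreover, the direction of the limit line may depend on the sequence, so ``the height difference at $x=0$'' is not well defined without a fixed axis. The paper uses an axis-free separation: for the two outermost sheets, the distance from the point closest to $p_0$ to the opposite sheet. It then relates this to the graphs via \eqref{eq:relative-gap}.

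\emph{The enlarged window.} Your enlarged-window device is flawed. After rescaling by $(\Tmax-t_i)^{-1/2}$, the window is only guaranteed to contain $[-1,0)$, because the maximum may sit at its left end. It is also unnecessary. The forward Harnack inequality bounds $\tilde w_i(x,\tau)$, on compact subsets of $I\times(-2,0)$, by $C\,\tilde w_i(0,\tau/2)$. Since $\tau/2\in[-1,0)$ for every $\tau\in(-2,0)$, this later value is already controlled by the one-sided choice \eqref{eq:tail-max}, exactly as in the paper.
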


\begin{proof}
Suppose that a tangent flow at $X_0$ is an interior line of multiplicity $q\ge2$, the line $P$ of multiplicity $q\ge1$, or the half-line $L_+$ of multiplicity $q\ge2$. At a boundary point the tangent flow is unique by Proposition~\ref{prop:tangent-flow-classification}. In the interior, all tangent flows have the same Gaussian density by \cite[Proposition~6.2 and Theorem~6.4]{edelen18}. Since the density of a static line is its multiplicity, every interior tangent line has multiplicity $q$, although its direction may depend on the sequence.

Put $\theta(t)=\sqrt{\Tmax-t}$. Any sequence of rescalings by $\theta(t)^{-1}$ as $t\uparrow\Tmax$ has a subsequence converging to a static flow with multiplicity $q$. Proposition~\ref{prop:sheeting} therefore gives the following descriptions for all sufficiently late times, and we define the separation function $d(t)$ as follows.
\begin{itemize}[leftmargin=*]
	\item \textit{The interior case.} On each of the two outermost sheets in $\overline B_{\theta(t)}(p_0)$, choose the point closest to $p_0$, and let $d(t)$ be the smaller of their distances to the opposite sheet. Notice that $d(t)=o(\theta(t))$ by Proposition \ref{prop:sheeting}.
	
	\item \textit{$\Sigma=P$.} Let $d(t)$ be the distance from $0$ to the first intersection of $\Gamma_t$ with $\{(x,0):0\le x\le\theta(t)\}$. In the fixed coordinates centred at $p_0$, Proposition~\ref{prop:sheeting} gives exactly $q$ transverse intersections. By convexity, all intersections have $x>0$ (see Remark \ref{rmk:convexity}).
	
	\item \textit{$\Sigma=L_+$.} Here $q=2$ by Proposition~\ref{prop:sheeting}. Let $d(t)$ be the arclength separation of the two endpoints along the short boundary arc about $p_0$.
\end{itemize}
By sheeting and embeddedness, $d(t)$ is positive and continuous, and by Proposition \ref{prop:sheeting}
\begin{equation}\label{eq:normalised-separation}
	\delta(t)\coloneqq\frac{d(t)}{\theta(t)}\to 0
	\quad\text{as }t\uparrow\Tmax.
\end{equation}
Choose $t_i\uparrow\Tmax$ such that
\begin{equation}\label{eq:tail-max}
	\sup_{t_i\le t<\Tmax}\delta(t)
	\le(1+i^{-1})\delta(t_i),
	\qquad \lambda_i=\theta(t_i)^{-1}.
\end{equation}
After passing to a subsequence, the rescaled flows \eqref{eq:rescale} converge to $q|\Sigma|$.  Proposition~\ref{prop:sheeting} gives ordered graphs $u_i^1<\cdots<u_i^q$ on domains exhausting $I\times(-2,0)$, where $I=\Rbb$ for lines and $I=[0,\infty)$ for the half-line. Also, $u_i^j\to0$ locally smoothly. Set
\[
h_i=
\begin{cases}
	u_i^1,&\Sigma=P,\\
	u_i^q-u_i^1,&\text{otherwise}.
\end{cases}
\]
The graphical equation for $u_i^1$ in the first case, and the difference of the equations for the outermost sheets in the other cases, give
\begin{equation}\label{eq:linear-collapse}
	(h_i)_\tau=A_i(h_i)_{zz}+B_i(h_i)_z+C_ih_i, 
\end{equation}
where $(A_i,B_i,C_i)\to(1,0,0)$ locally smoothly and $z=y$ in the interior and $\Sigma=P$ cases, and $z=x$ for the Fermi half-line graphs in the case $\Sigma=L_+$. The functions $h_i$ are positive (in the case $\Sigma=P$ this is a consequence of convexity as pointed out in Remark \ref{rmk:convexity}), and in the half-line case they satisfy homogeneous Neumann data at $0$.

We now relate $h_i$ to $d(t)$. Let $t=\Tmax+\lambda_i^{-2}\tau$. In both boundary cases, by definition, $h_i(0,\tau)=\lambda_i d(t)$.
In the interior case, let $s_i(\tau)$ be the $y$-coordinate of the closest point to the origin realising $d(t)$. Sheeting gives $s_i\to0$ uniformly on compact time intervals. Moreover, since the graphs have slopes $o(1)$ by Proposition \ref{prop:sheeting}, projecting the segment defining $\lambda_i d(t)$ onto the fixed normal gives
\begin{equation}\label{eq:relative-gap}
 h_i(s_i(\tau),\tau)=(1+o(1))\lambda_i d(t)
 =(1+o(1))\sqrt{-\tau}\,\delta(t).
\end{equation}
uniformly on compact time intervals in $(-2,0)$.

Normalise by setting $w_i=h_i/\delta(t_i)$. Equations \eqref{eq:tail-max} and \eqref{eq:relative-gap} yield
\[
w_i(s_i(-1),-1)\to1,\qquad
w_i(s_i(\tau),\tau)\le\sqrt{-\tau}+o(1)
\quad(-1\le\tau<0).
\]
Harnack's inequality and parabolic estimates \cite{Lieberman} give locally uniform bounds for $w_i$ and its spatial derivative, including at the Neumann boundary. Therefore, up to a subsequence, the $w_i$ converge locally smoothly to a non-negative solution $w$ of $w_\tau=w_{zz}$, with homogeneous Neumann data when $I=[0,\infty)$. Since $s_i\to0$,
\[
w(0,-1)=1,\qquad w(0,\tau)\le\sqrt{-\tau}
\quad(-1\le\tau<0).
\]
This contradicts Lemma~\ref{lem:collapse}.
\end{proof}

\begin{remark}[The role of convexity]
    Convexity is crucial to rule out the (quasi)-static line $P$ as a tangent flow. Indeed, notice that for general free boundary curve shortening flows in non-convex domains (see the discussion in \cite{FB-mean-convex} at p.~770, following Theorem~1.1 and at p.~774, in footnote~3) this possibility can in fact arise. In the proof of Proposition \ref{prop:static-exclusion}, convexity appears indirectly, as it guarantees positivity of the functions $h_i$ in the case $\Sigma=P$ (see also Remark \ref{rmk:convexity}), which is used to extract a non-negative solution of the heat equation to which Lemma \ref{lem:collapse} applies.
\end{remark}

\section{Proof of Theorem \ref{thm:main}}\label{sec:completion}

\begin{proposition}\label{prop:boundary-sing}
	At a finite-time boundary singular point, the unique tangent flow is the multiplicity-one shrinking semicircle.
\end{proposition}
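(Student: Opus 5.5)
By Proposition~\ref{prop:tangent-flow-classification}, at a finite-time boundary singular point $X_0=(p_0,\Tmax)$ with $p_0\in S$, the tangent flow is unique and is one of (B1), (B2) with some multiplicity $q$, or (B3). Proposition~\ref{prop:static-exclusion} excludes (B3) and forces $q=1$ in case (B2). So the plan is to rule out the multiplicity-one static orthogonal half-line $L_+$ at a genuinely singular boundary point. Once that is done, only (B1) is left, and Proposition~\ref{prop:tangent-flow-classification} already gives it with $q=1$.

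To exclude a multiplicity-one (B2), I will use Edelen's free boundary local regularity theorem \cite[Corollary~8.8]{edelen18}. This is the boundary analogue of White's local regularity theorem \cite{whi05localreg}. If the Gaussian density ratios of a free boundary Brakke flow at $X_0$ are close to $1$ at some small scale, then the flow is smooth with bounded curvature in a parabolic neighbourhood of $X_0$. In the reflected picture this is the density of a line, i.e.\ the boundary version of density $1$.

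If the tangent flow is $|L_+|$ with multiplicity one, its reflection about $P$ is a multiplicity-one static line. Hence the Gaussian density of $\Gamma$ at $X_0$ equals $1$, in Edelen's boundary normalisation that counts the reflected flow. By upper semicontinuity of density, and the convergence of the rescaled density ratios to that of the tangent flow, the density ratios at nearby points and small scales are below $1+\varepsilon$. This places us in the hypothesis of \cite[Corollary~8.8]{edelen18}, after checking that the rescaled domains $(\Omega_i,S_i)$ converge smoothly to $(H,P)$ so that the constants are uniform.

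The conclusion is a uniform curvature bound for $\Gamma_t$ in a fixed neighbourhood $B_r(p_0)\times(\Tmax-r^2,\Tmax)$. In fact Proposition~\ref{prop:sheeting} with $q=1$ already gives smooth graphical convergence to $L_+$ on compact time intervals, so this step is essentially automatic. Such a bound contradicts $X_0$ being a point where the curvature blows up.

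A subtlety is that \eqref{eq:curvature-blowup} only asserts $\sup_{\Gamma_t}|\kappa|\to\infty$, not that it blows up near $p_0$. I therefore have to be clear about what "singular point" means. I take it to mean that the curvature is unbounded in every spacetime neighbourhood of $X_0$, which is the meaning the subsequent proof of Theorem~\ref{thm:main} needs: some point of blowup exists by compactness of $\oOm$. Local regularity then contradicts the (B2) alternative at such a point. I expect this step to be the main obstacle: matching the density normalisation at boundary points with the hypotheses of Edelen's theorem, and ensuring regularity up to $\Tmax$ rather than only on compact subintervals of $(-\infty,0)$ in rescaled time.
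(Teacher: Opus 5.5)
Your proposal is correct and follows the paper's proof: reduce via Propositions~\ref{prop:tangent-flow-classification} and~\ref{prop:static-exclusion} to either a multiplicity-one half-line or the semicircle, then exclude the half-line because its reflected Gaussian density is $1$, so Edelen's local regularity \cite[Corollary~8.8]{edelen18} would make $X_0$ regular. Drop the aside that Proposition~\ref{prop:sheeting} makes this step ``essentially automatic'': sheeting only controls the flow on parabolically shrinking regions over compact rescaled-time intervals, giving at best $|\kappa|=o((\Tmax-t)^{-1/2})$ there rather than a curvature bound on a fixed neighbourhood up to $\Tmax$, which is exactly why the local regularity theorem is needed.
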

\begin{proof}
	By Propositions~\ref{prop:tangent-flow-classification} and~\ref{prop:static-exclusion}, the unique tangent flow is either a multiplicity-one orthogonal half-line or a multiplicity-one shrinking semicircle. In the half-line case, the reflected Gaussian density is $1$, so Edelen's local regularity result \cite[Corollary~8.8]{edelen18} would make the point regular, a contradiction.
\end{proof}

\begin{proof}[Proof of Theorem~\ref{thm:main}]
	If $\Tmax=\infty$, apply Proposition~\ref{prop:immortal}. Suppose $\Tmax<\infty$. By \eqref{eq:curvature-blowup}, choose $t_i\uparrow\Tmax$ and $p_i\in\Gamma_{t_i}$ with $|\kappa(p_i,t_i)|\to\infty$. Compactness of $\oOm$ gives a subsequence with $p_i\to p_0$ and the point $(p_0,\Tmax)$ is singular. If $p_0\in\Omega$, Propositions~\ref{prop:tangent-flow-classification} and~\ref{prop:static-exclusion} imply that every tangent flow there is a multiplicity-one static line. The Gaussian density is therefore $1$, and White's local regularity theorem \cite[Theorems~3.1 and~3.5]{whi05localreg}, applied in a neighbourhood disjoint from $S$, would make the point regular. Thus $p_0\in S$, and Proposition~\ref{prop:boundary-sing} identifies the unique tangent flow as the multiplicity-one shrinking semicircle.
	
	Let $t_i\uparrow\Tmax$ now be arbitrary and put $\lambda_i=(\Tmax-t_i)^{-1/2}$. The rescaled flows \eqref{eq:rescale} about $(p_0,\Tmax)$ converge subsequentially to
	$\Sigma_\tau=\sqrt{-2\tau}\,\Sbb^1_+$
	with multiplicity one. Choose a good time $\tau_0\in(-2,-1)$. As in the compact case of the proof of Proposition~\ref{prop:tangent-flow-classification}, local support convergence and connectedness imply that the whole curve $\Gamma^i_{\tau_0}$ lies in a fixed ball for all large $i$. Thus Remark~\ref{rmk:good-times} gives, after a further subsequence and reparametrisation, global $C^{1,\alpha}$ convergence to $\Sigma_{\tau_0}$, for $0<\alpha<1/2$.
	
	The boundaries $S_i$ converge smoothly to $P$ near the compact limiting flow on $[\tau_0,-1/2]$. Therefore, by continuous dependence and positive-time smoothing for free boundary curve shortening flow, for every $0<\epsilon<-1/2-\tau_0$, $\Gamma^i_\tau \to \Sigma_\tau$ smoothly on $[\tau_0+\epsilon,-1/2]$. 
	In particular, we obtain smooth convergence of $\Gamma^i_{-1}$ to $\Sigma_{-1}$.
	Finally, dividing by $\sqrt2$ gives the convergence asserted in Theorem~\ref{thm:main}\textup{(ii)} along this subsequence.
	
	Since the sequence $t_i\uparrow\Tmax$ was arbitrary and the tangent flow is unique, the rescaled curves converge smoothly to the unit semicircle.
\end{proof}

\bibliographystyle{amsalpha}
\bibliography{references}
\end{document}